\newcommand{\idi}{{\rm Id}}
\newcommand{\defalf}{\alpha}
\newcommand{\mat}{{\rm{Mat}}}
\newcommand{\moltp}{{N -1 \choose n -1}}
\newcommand{\abv}{2}
\newcommand{\cnbundle}{{\mathcal{O}_{N}}}
\newcommand{\cbo}{{\mathcal{O}_{n}}}
\newcommand{\cani}{{\mathcal{O}}}
\newcommand{\chern}{{\rm{c_1}}}
\newcommand{\bertrand}{\kappa}
\newcommand{\invf}{{\hat\phi}}
\newcommand{\piuc}{\boxplus}
\newcommand{\gum}{E}
\newcommand{\ambiente}{\gum^n}
\newcommand{\coefmeta}{{\frac{1}{n-d}+\eta}}
\newcommand{\invcd}{{\frac{1}{n-d}}}
\newcommand{\codv}{n-d}
\newcommand{\invh}{{\frac{1}{n-d}}}
\newcommand{\ef}{\phi}
\newcommand{\efdual}{\hat{\phi}}
\newcommand{\czero}{c_0(E^n, \eta)}
\newcommand{\czerosp}{c_0}
\newcommand{\cuno}{c_1(E^n,\eta)}
\newcommand{\cunosp}{c_1}
\newcommand{\cunospdue}{c_2}
\newcommand{\ch}{c_2}
\newcommand{\elle}{{\mathcal{L}}}
\newcommand{\stab}{{\rm{Stab}}\,\,}
\newcommand{\stabv}{{\rm{Stab}}\,\,V}
\newcommand{\rend}{{\rm End}(E)}
\newcommand{\qe}{\mathbb{Q}}
\newcommand{\co}{\mathbb{C}}
\newcommand{\re}{\mathbb{R}}
\newcommand{\ze}{\mathbb{Z}}
\newtheorem{thm}{Theorem}[section]
\newtheorem{con}[thm]{Conjecture}
\newtheorem{propo}[thm]{Proposition}
\newtheorem{lem}[thm]{Lemma}
\newtheorem{cor}[thm]{Corollary}
\newtheorem{D}[thm]{Definition}
\newtheorem*{thm1}{Theorem \ref{uno}}
\title[{ A functorial  lower bound for the Essential Minimum} ]
{A functorial lower bound for the Essential Minimum  of  varieties in a power of an elliptic curve}
\author[{ Evelina Viada}]{  
 }
\begin{document}

\maketitle
\centerline{Evelina Viada\footnote{Evelina Viada,
     University of Basel, Rheinsprung 21, CH-4051 Basel
Switzerland,
    evelina.viada@unibas.ch.}
    \footnote{Supported by the SNF (Swiss National Science Foundation).}
\footnote{Mathematics Subject classification (2000): 11J95, 14K12, 11G50 and 11D45.\\
Key words:  Elliptic curves, Varieties, Normalized Height of Varieties, Diophantine approximation. }}

 \begin{abstract}
 
 A subvariety $V$ of an abelian variety is translate if it is  the union of translates of proper algebraic subgroups. An irreducible $V$ is transverse if it is not contained in any translate variety.  Effective sharp lower bounds for a transverse subvarieties of a power of an elliptic curve $E$ are known.
 Here, we prove a sharp lower bound for the essential minimum of  non-translate subvarieties  of $E^g$.  
  \end{abstract}

\section{introduction}

In this work,  variety means a variety defined over the algebraic numbers.  
Let $A$ be an abelian variety. 
To  a symmetric  ample line bundle $\elle$ on $A$ we associate an embedding $i_\elle: A\hookrightarrow \mathbb{P}^m$ defined by the minimal power of $\elle$ which is very ample. Heights and degrees corresponding to $\elle$ are computed  via such an embedding. More precisely,   $h_\elle$  is the $\elle$-canonical N\'eron-Tate height. The degree of a subvariety of $A$ is the degree of its image under $i_\elle$. 
For $E$  an elliptic curve, we denote by $\cani$ the line bundle on $E$ defined by the neutral element. 
 The standard line bundle $\cbo$ on $\gum^n$  is the tensor product of the pull-back of $\cani$ via the natural projections.   In this paper, we consider irreducible subvarieties $V$ of $E^n$. We are going to analyse the sets of points of small height of $V$, with respect to different line bundles.\\

It is well-known that the set of torsion points of $A$ is a dense subset of  $A$ and it coincides with the points of trivial height on $A$.  A quite deep problem is to 
 investigate lower bounds for the points of $A$ of
positive height (ex. 
  Baker and Silvermann  \cite{B}, \cite{Sil}, David and Hindry \cite{sihi}, Masser \cite{Masser}). A general question is to study the height of
points on  an algebraic subvariety $V$ of $A$ of positive dimension $d$. 
We say that  $V$ is  {\it torsion} if it is the union of some
translates of algebraic subgroups of $A$ by torsion points. Moreover an irreducible $V$ is {\em transverse} if it is not contained in any translate of an abelian subvariety.
The Manin-Mumford conjecture (Raynaud \cite{raynaud}) ensures that the
torsion points are dense in $V$ if and only if $V$ is torsion. The Bogomolov Conjecture (Ullmo
\cite{ulmo} and Zhang \cite{zhang}) is a statemnt on the points of $V$
of positive height.\\

Essential minimum: {\it
The essential minimum 
$\mu_\elle (V)$ is the supremum of the reals $\theta$ such that the
set  $ \{ x \in V(\overline{\qe}) \,\,\,:\,\,\,h_\elle(x)\le \theta\}$
is non-dense in $V$.}\\

Bogomolov Conjecture:  {\it The essential minimum $\mu_\elle(V)$ is strictly positive if and only if $V$ is non-torsion.}\\ 

The problem of giving explicit bounds for  $\mu_\elle(V)$ depending on the invariants of $V$ and
of  the  ambient variety has been investigated in several deep works (for instance in the toric case Amoroso and David  \cite{fra}, Bombieri and Zannier \cite{BZ},   Schmidt \cite{Sc} and in the abelian case David and Philippon \cite{sipacommentari}, \cite{sipa}). Different points of view can be assumed.
On one side one can fix,  once  for all, the embedding of  the ambient
variety in a projective space. This corresponds to fixing a polarization on $A$. On the other hand one can vary the embedding  and  investigates the dependence of the essential minimum on the embedding.\\

Most of the known results are obtained after  fixing a standard
embedding.  Amoroso and
David \cite{fra} theorem 1.4 prove a  quasi-optimal lower bound of the
essential minimum  of
 a transverse subvariety of  a torus.  A new and simplified approach
 is introduced by Amoroso and the author \cite{fraio}. David and Philippon  \cite{sipa1}, \cite{sipacommentari}  optained
several non-optimal lower bounds for the essential minimum of a
subvariety of  a
general abelian variety and stronger bounds for a subvariety of a power of an elliptic curve.  
In a preprint, Galateau \cite{Gal} proves a quasi-optimal lower bound in a product of elliptic
curves. 

If the polarization varies,  we expect  that  the polarization  influences the
  essential minimum in a natural way. This kind of problems are called `functorial Bogomolov'. For tori, a functorial conjecture has  been introduced by 
 Amoroso and David \cite{fra}. Not much is proven in this direction.
 Some results in the toric case and for $V$  a translate of a subtorus are  proven by Sombra and Philippon \cite{sombrapa}. 
A weak-functorial result
 has applications in the context of the so called Zilber-Pink
 conjecture, which is a generalization of a work of Bombieri, Masser
 and Zannier and of the Mordell-Lang plus Bogomolov problem.  In our
 works \cite{ant} proposition 13.3, \cite{irmn} proposition 4.3 and \cite{ijnt}  proposition 4.6 we produce a functorial result in the very special case of an isogeny of $E^n$ or of an abelian variety. This motivated our interest. In \cite{ant}, we advice an isogeny functoriality. This is  the first result of this paper. From the main result of Galateau \cite{Gal}, we deduce:
  \begin{thm}
\label{uno}  Let $V$ be a transverse subvariety of $\ambiente$ of
dimension  $d$. Then,  for any isogeny 
$\phi:\ambiente \to \ambiente$ and any $\eta>0$, 
 $$\mu_{\phi^*\cbo}(V)\ge \cuno \frac{\left({\deg_{\phi^*\cbo}\ambiente}\right)^{\invcd -\eta}}{\left({\deg_{\phi^*\cbo}V}\right)^{\coefmeta}},$$
where $\cuno$ is a positive constant depending on $E^n$ and $\eta$.

\end{thm}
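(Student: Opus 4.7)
My strategy is to reduce the statement to Galateau's quasi-optimal bound on $(\ambiente,\cbo)$ by exploiting the isogeny functoriality of the canonical N\'eron--Tate height. The basic identity $h_{\phi^*\cbo}(x) = h_\cbo(\phi(x))$, valid for every $x\in\ambiente(\overline{\qe})$, transfers to the essential minimum: because $\phi|_V: V\to\phi(V)$ is a surjective finite morphism, a subset of $V$ is Zariski-dense in $V$ if and only if its image is Zariski-dense in $\phi(V)$, which gives
$$\mu_{\phi^*\cbo}(V) = \mu_\cbo(\phi(V)).$$

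Next I verify that $\phi(V)$ is transverse in $\ambiente$: were $\phi(V)$ contained in some translate $B+x_0$ of a proper abelian subvariety $B$, then the irreducible $V$ would lie inside a connected component of $\phi^{-1}(B+x_0)$, itself a translate of the identity component of $\phi^{-1}(B)$ --- an abelian subvariety of the same codimension as $B$ --- contradicting the transversality of $V$. Having done this, I apply Galateau's main theorem to the transverse $d$-dimensional subvariety $\phi(V)\subset\ambiente$ to obtain
$$\mu_\cbo(\phi(V)) \ge c(\ambiente,\eta)\, \frac{(\deg_\cbo\ambiente)^{\frac{1}{n-d}-\eta}}{(\deg_\cbo\phi(V))^{\frac{1}{n-d}+\eta}}.$$

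Finally I translate this estimate through the projection formula for the isogeny $\phi$ of degree $N$. Writing $N_V:=\deg(\phi|_V)$, which divides $N$, one has $\deg_{\phi^*\cbo}\ambiente = N\cdot\deg_\cbo\ambiente$ and $\deg_{\phi^*\cbo}V = N_V\cdot\deg_\cbo\phi(V)$. Substituting these relations recasts Galateau's inequality in terms of $\phi^*\cbo$-degrees, producing the target expression up to a multiplicative factor $N_V^{1/(n-d)+\eta}/N^{1/(n-d)-\eta}$.

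I expect the principal obstacle to be controlling this last factor uniformly in $\phi$: it can be small when $V$ has trivial (or small) stabiliser and $N_V$ stays bounded while $N=\deg\phi$ grows, as happens for the multiplication-by-$m$ isogenies. The slack built into Galateau's bound via the parameter $\eta$ must therefore be exploited --- most naturally by running Galateau with a slightly smaller parameter $\eta'<\eta$ and letting the surplus in the exponents absorb the unfavourable powers of $N$ --- in order to arrive at the uniform constant $\cuno$ depending only on $\ambiente$ and $\eta$.
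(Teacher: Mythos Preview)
Your reduction $\mu_{\phi^*\cbo}(V)=\mu_\cbo(\phi(V))$ and the transversality check are fine, but applying Galateau directly to $\phi(V)$ does not prove the theorem, and the $\eta$--slack cannot rescue it. Write $N=|\ker\phi|$ and $N_V=|\stab V\cap\ker\phi|$, so that $\deg_{\phi^*\cbo}\ambiente=N\cdot 3^n$ and $\deg_{\phi^*\cbo}V=N_V\,\deg_\cbo\phi(V)$. Your bound reads
\[
\mu_{\phi^*\cbo}(V)\ \ge\ c_0(\eta)\,N_V^{\,\frac{1}{n-d}+\eta}\,(\deg_{\phi^*\cbo}V)^{-\frac{1}{n-d}-\eta},
\]
whereas the target is $c_1\,N^{\frac{1}{n-d}-\eta}\,3^{-\frac{n}{n-d}+n\eta}\,(\deg_{\phi^*\cbo}V)^{-\frac{1}{n-d}-\eta}$. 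The discrepancy is the factor $N_V^{\frac{1}{n-d}+\eta}/N^{\frac{1}{n-d}-\eta}$ you isolated. Take any transverse $V$ with trivial stabiliser (these exist in abundance) and $\phi=[m]$: then $N_V=1$ while $N=m^{2n}$, and the missing factor is $m^{-2n/(n-d)+2n\eta}$. Running Galateau with a smaller $\eta'$ only alters exponents by $O(\eta-\eta')$, while the deficit in the exponent of $m$ is $2n/(n-d)$, a fixed positive quantity independent of $\eta$. No choice of $\eta'$ closes this gap.

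The paper supplies the missing idea: instead of estimating $\mu_\cbo(\phi(V))$ by Galateau on $\phi(V)$, one introduces the dual isogeny $\hat\phi$ with $\phi\hat\phi=[\alpha]$, sets $W=\hat\phi^{-1}(V)$, and uses $\phi(V)=[\alpha]W$ to write $\mu_\cbo(\phi(V))=|\alpha|^2\mu_\cbo(W)$. Galateau is then applied to $W$, whose degree is computed via the stabiliser lemma as $\deg_\cbo W=\dfrac{|\ker\hat\phi|}{|\alpha|^{2d}}\deg_{\phi^*\cbo}V$. The prefactor $|\alpha|^2$ together with the identity $|\ker\phi|\cdot|\ker\hat\phi|=|\alpha|^{2n}$ is exactly what manufactures the numerator $(\deg_{\phi^*\cbo}\ambiente)^{1/(n-d)}$; only an error of order $|\alpha|^{-O(\eta)}$ remains, and \emph{that} is what the $\eta$--slack legitimately absorbs (using $|\alpha|\le|\det\phi|$). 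In short, the key step you are missing is to pass through $\hat\phi^{-1}(V)$ rather than $\phi(V)$.
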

 Since $\mu_{\mathcal{O}_n} (\phi
 V)=\mu_{\phi^*\mathcal{O}_n}(V)$ we can deduce relations of the
 several essential minimi of the image of a variety via isogenies. 
 
  After this first result, we were induced to belive to a more general functorial principle. Let us clarify the setting from another point of view. Only under the geometric assumption that $V$ is transverse, there exist non trivial lower bounds for $\mu(V)$. However, an irreducible variety $V$ is always `relatively'  transverse, meaning that it is transverse in a minimal translate $H$ of an abelian subvariety of the ambient variety $E^N$. If on   $E^N$ we consider the standard polarization, what can one say on the essential minimum of  $V$? In other words,  we consider on $H$ the restriction of the standard polarization of $E^N$. Such a restriction is in general not the standard polarization of $H$.   
 Our main result is an elliptic analogue, up to a remainder term, of a toric conjecture of Amoroso and David.
 \begin{thm}
\label{due} Let  $H$ be the translate of an abelian subvariety of $E^N$ of dimension $n$.
Let $V$ be a $d$-dimensional variety transverse in $H$. Then, for any
$\eta>0$, there exists a positive constant $\ch$ such that 
$$\mu_\cnbundle (V) \ge \ch  \frac{
( \deg_\cnbundle H)^{\invh- \eta}
}{
(\deg_\cnbundle V)^{\invh+ \eta}
}.$$

 \end{thm}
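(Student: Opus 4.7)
My plan is to reduce Theorem \ref{due} to Theorem \ref{uno} via Poincar\'e's reducibility theorem applied to $\gum^N$. First, I would dispose of the translation: writing $H = p + B$ with $B$ an abelian subvariety, the shifted variety $V - p \subset B$ is transverse in $B$, and all heights, degrees and essential minima change only by quantities controlled by $h(p)$, which can be absorbed into the constant $\ch$ or into an $\eta$-error.

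Next, because $E$ is a fixed elliptic curve, ${\rm End}(\gum^N) \cong \mat_N({\rm End}(\gum))$ and every $n$-dimensional abelian subvariety $B \subset \gum^N$ arises as the image of an isogeny $\psi: \gum^n \to B$. Let $\tilde V$ be an irreducible component of $\psi^{-1}(V - p)$; then $\tilde V$ is transverse in $\gum^n$, since any proper translate of an algebraic subgroup of $\gum^n$ containing $\tilde V$ would push forward under $\psi$ to such a translate in $B$ containing $V - p$, contradicting the hypothesis on $V$.

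Setting $L := \psi^*(\cnbundle|_B)$ and $e := \deg \psi$, the standard pull-back identities give
$$\deg_L \gum^n = e \cdot \deg_\cnbundle H, \qquad \deg_L \tilde V \le e \cdot \deg_\cnbundle V, \qquad \mu_L(\tilde V) = \mu_\cnbundle(V - p).$$
Choosing an isogeny $\phi: \gum^n \to \gum^n$ with $\phi^*\cbo$ equal (or proportional) to $L$, an application of Theorem \ref{uno} to $\tilde V \subset \gum^n$ with polarization $\phi^*\cbo$ yields a lower bound for $\mu_L(\tilde V)$. Substituting the above identities, the $e$-factors combine into a term of order $e^{O(\eta)}$; since $e$ is polynomially bounded in $\deg_\cnbundle H$ (by the standard bounds on minimal isogeny degrees for abelian subvarieties of $\gum^N$), this factor is absorbable by slightly enlarging $\eta$, and the constant $\cuno$ is relabelled $\ch$.

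The principal technical obstacle is the identification of $L$ with $\phi^*\cbo$ for a square isogeny $\phi: \gum^n \to \gum^n$. As a symmetric ample line bundle, $L$ corresponds to a symmetric positive-definite matrix in $\mat_n({\rm End}(\gum))$, while $\phi^*\cbo$ corresponds to a Gram-type matrix. Not every symmetric positive-definite matrix over ${\rm End}(\gum)$ is integrally of Gram form, so I would either replace $L$ by a suitable integer multiple $kL$ for which such a factorisation exists, or invoke a quasi-comparison $c \cdot \phi^*\cbo \le L \le C \cdot \phi^*\cbo$ whose comparison factors are absorbed into the $\eta$-error.
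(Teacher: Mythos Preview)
Your strategy has the right architecture---pull back to $E^n$ via an isogeny and invoke Theorem \ref{uno}---but there are two genuine gaps.

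\textbf{The translation step.} You write that the effect of translating by $p$ ``can be absorbed into the constant $\ch$ or into an $\eta$-error''. This fails: $H$ is an arbitrary translate, so $h(p)$ is unbounded, while $\ch$ must depend only on $E$, $n$, $N$, $\eta$. The paper handles this differently (Proposition \ref{fine}): either $V$ already has no points of height $\le \tfrac{1}{2}\theta$ and we are done, or we pick $\xi\in V$ with $h(\xi)\le\tfrac{1}{2}\theta$ and translate by $\xi$ rather than by $p$. The point is that the translation vector is chosen \emph{inside $V$} and of controlled height, not given in advance.

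\textbf{The line-bundle identification.} You correctly flag the obstacle: $L=\psi^*(\cnbundle|_B)$ corresponds to the Hermitian form $M^\dagger M$ (with $M$ the $N\times n$ matrix of $\psi$), and there is in general no single square isogeny $\phi:E^n\to E^n$ with $\phi^*\cbo\cong L$, nor any obvious integer $k$ making $kL$ a Gram matrix over $\rend$. Your proposed quasi-comparison $c\cdot\phi^*\cbo\le L\le C\cdot\phi^*\cbo$ would need $c,C$ bounded independently of $H$ (or at worst polynomially in $\deg_\cnbundle H$), and you give no argument for this.

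The paper's solution is not to find a single $\phi$ but to decompose $L$ (after a power) as a \emph{tensor product} $\bigotimes_I\phi_I^*\cbo$ over all $n$-element subsets $I\subset\{1,\dots,N\}$, where $\phi_I$ is the $n\times n$ submatrix of $\hat\phi$ on the rows indexed by $I$ (Theorem \ref{relchiave}). For each factor one applies Theorem \ref{uno}; Lemma \ref{minessmin} then sums the essential minima, and Proposition \ref{gael} controls the degree of the tensor product. The catch is that each $\phi_I$ must be an isogeny, i.e.\ every $n\times n$ minor of the relevant matrix must be nonzero. This is an open condition that fails for general $H$, so the paper first applies a bounded integral transformation $T\in\mathrm{SL}_N(\ze)$ (constructed combinatorially in Section \ref{sette}) that moves $H$ to a subvariety $\mathbf{H}$ for which all these minors are nonzero, at a cost controlled by Lemma \ref{stima}. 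This decomposition-plus-genericity step is the technical heart of the proof, and your proposal does not supply an alternative to it.
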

 To prove the theorem, we first assume that $H$ is an abelian variety. This hypothesis is then removed with a simple trick, proposition \ref{fine}.
The main idea of the proof is to approximate $V\subset H$ with ${\bf V}\subset {\bf H}$ in a more convenient position, see section \ref{quattro}. For this we use a `small' transformation, constructed with some technical steps in section \ref{sette}. Then we prove that the restriction of $\mathcal{O}_N$ on ${\bf H}$ is comparable to tensor products of pull backs of the standard polarization on ${\bf H}$, proposition \ref{kercan}. We finally prove that essential minimum and degrees behave well with respect to such operations. For this we use, among other, theorem \ref{uno}.

We would like to  mention that what we actually prove is: 

{\em  if  a quasi-optimal bound for the essential minimum holds for the standard polarization, then a natural analog  holds for the restrictions of the standard polarization to an abelian subvariety. }\\

Finally, in the appendix we present a proposition of Patrice Philippon, which clarifies the relation between functorial conjectures.

In the next section we fix the notation and we prove some basic results. In section \ref{tre} we prove theorem \ref{uno}. In section \ref{quattro} we describe the geometric situation. Thanks to a technical result proven in section \ref{sette}, we approximate our varieties with more convenient ones. For such varieties, we then prove an equivalence of line bundles which represents one of the key ingredients of the proof. In section \ref{sei}  we conclude the proof of theorem \ref{due}. In the appendix we relate our result to other conjectures.\\

{\it Acknowledgments} It is a pleasure for me to thank Patrice Philippon for the appendix and Ga\"el R\'emond for several remarks and contribution.

\section{Preliminaries}

\subsection{Morphisms}
Let $E$ be an elliptic curve defined over the algebraic numbers. We denote by $\rend$ the ring of endomorphism of $E$, that is an order
in a quadratic field. We fix an embedding of $\rend$ in $\co$, and for
$a\in \rend$ we indicate by $|a|$ the standard absolute value in
$\co$. Note that such a value does not depend on the choice of the
embedding. 

Let $n\le N$ be positive integers. Recall that  for a morphism $\psi:\gum^N \to \gum^n$, $$\ker \psi=\{x\in \gum^N\,\,\,:\,\,\,\psi(x)=0\}.$$
To a matrix $\psi \in \mat_{n\times N}(\rend)$ we associate a morphism
$$\psi:\gum^N \to \gum^n,\,\,\, (x_1,\dots, x_N)\to ({\psi_{11}}x_1+\cdots +{\psi_{1N}}x_N,\dots,{\psi_{n1}}x_1+\cdots +{\psi_{nN}}x_N).$$ Viceversa to a morphism $\psi:\gum^N \to \gum^n$, we associate the  matrix defining its kernel
$$\ker \psi=\begin{cases}{\psi_{11}}x_1+\cdots +{\psi_{1N}}x_N=0\\
\vdots\\
{\psi_{n1}}x_1+\cdots +{\psi_{nN}}x_N=0.
\end{cases}$$

\begin{lem}[Kernel relations]
\label{relker}
They hold:
\begin{enumerate}

\item For $\psi\in \mat_{N\times m}(\rend)$ and $\psi' \in \mat_{m'\times N}(\rend)$ with $m,m'\in \mathbb{N}^*$, $$\psi^{-1} \ker \psi'=\ker (\psi'\psi).$$
\item For $a\in \mat_{1\times N-n}(\rend)$, $b\in  \mat_{1\times n}(\rend)$, $$\ker  \begin{pmatrix}\idi_{N-n}&0 \\ a & b\end{pmatrix} =\ker  \begin{pmatrix}\idi_{N-n}&0 \\ 0 & b\end{pmatrix}  .$$
\end{enumerate}
\end{lem}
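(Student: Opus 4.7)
The plan is to prove the two items essentially by unwinding definitions; neither step requires more than a direct verification, but one must be attentive to the conventions that (a) a matrix $\psi \in \mat_{n \times N}(\rend)$ is identified with the morphism $E^N \to E^n$ it defines, (b) composition of such morphisms corresponds to matrix multiplication (because $\rend$ acts linearly on $E$ and the group law on $E^n$ is coordinate-wise addition), and (c) ``$\ker \psi$'' is described as the system of equations whose $i$-th row is the $i$-th coordinate of $\psi(x)=0$.

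For (i), I would simply compute:
\begin{equation*}
x \in \psi^{-1}\ker\psi' \iff \psi(x) \in \ker\psi' \iff \psi'(\psi(x)) = 0 \iff (\psi'\psi)(x) = 0 \iff x \in \ker(\psi'\psi),
\end{equation*}
where the last equivalence uses that matrix multiplication in $\mat(\rend)$ agrees with composition of the associated morphisms $E^m \to E^N \to E^{m'}$. Point out that the sizes match: $\psi'\psi \in \mat_{m' \times m}(\rend)$, so both sides are subsets of $E^m$.

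For (ii), I would take a point $(x_1,\dots,x_N) \in E^N$ and write the two kernel systems out explicitly. For the left-hand matrix, the first $N-n$ rows yield $x_1 = \cdots = x_{N-n}=0$, and the last row is
\begin{equation*}
a_1 x_1 + \cdots + a_{N-n} x_{N-n} + b_1 x_{N-n+1} + \cdots + b_n x_N = 0.
\end{equation*}
Substituting the vanishings into this equation kills the $a$-part, so the last row reduces to $b_1 x_{N-n+1} + \cdots + b_n x_N = 0$, which is precisely the last row of the right-hand matrix (whose first $N-n$ rows are identical). This shows the two kernels coincide as subsets of $E^N$.

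There is no genuine obstacle: both statements are formal consequences of the dictionary between matrices over $\rend$ and morphisms between powers of $E$. The only point demanding care is that this dictionary behaves well with respect to composition and row operations, which holds because endomorphisms of $E$ form a (commutative) ring acting on the abelian group $E$, so the usual linear-algebra manipulations carry over verbatim to the description of kernels.
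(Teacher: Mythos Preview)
Your proof is correct and follows essentially the same route as the paper: part (i) is the same one-line unwinding of the definition of preimage and kernel, and part (ii) is exactly the ``simple Gauss reduction'' the paper invokes, which you have merely written out in full.
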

\begin{proof}
The first relation is  proven by
$$\psi^{-1}\ker \psi'=\psi^{-1}\{x \in \gum^N: \psi'(x)=0\}=\{y \in \gum^N : \psi' \psi (y)=0\}=\ker \psi'\psi.$$ 
The second relation is a simple Gauss reduction.
\end{proof}

We define the norm of a morphism  $\psi:\gum^N\to \gum^n$ as the maximum of the absolute value of the entries of  its associated matrix
$$||\psi||=\max_{ij} |\psi_{ij}|.$$

\subsection{Basic relations of degrees}

In the first instance, we recall basic relations of degrees. Let $n$ and $m$ be positive integers. Let $\elle$ be a symmetric ample line bundle on $E^n$ and let $\elle^m$ be the tensor product of $\elle$, $m$-times. Let  $V$ be an irreducible algebraic subvariety of $\gum^n$ of dimension $d$. Then,
\begin{equation}
\label{gradino1ii}\deg_{\elle^{m}}V=m^{ d}\deg_\elle V.\end{equation}

We now study some relations of degrees under the action of the multiplication morphism, or more in general under the action of an isogeny.
For  $a \in \rend$, we denote by $[a]$ the multiplication by $a$ on $E^n$. Hindry \cite{Hin} Lemma 6 proves
\begin{equation}
\label{hininv}\deg_\elle [a]^{-1}V=|a|^{2(n-d)}\deg_\elle  V
\end{equation}
and
\begin{equation}
\label{hinm} \deg_\elle  [a]V=\frac{|a|^{\abv d}}{|{\rm{Stab}}\,\,\,V\cap \ker[a]|}\deg_\elle V.\end{equation}

Let $\phi:\ambiente \to \ambiente$ be an isogeny. Projection's Formula gives
$$\deg_{\ef^*\elle}V=\deg_\elle \ef_*(V).$$
For a finite set $S$ we denote by $|S|$ its cardinality.  By  \cite{l-b} Corollary (6.6) page 68
 $$\deg_{\phi^*\mathcal{L}}\ambiente=|\ker \phi |\deg_{\mathcal{L}}\ambiente .$$
We now recall a lemma which relates the degree of a variety and of its push-forward under an isogeny of the ambient variety. This relation will be used in several occasions.
 \begin{lem}[\cite{ijnt} Lemma 4.2]
\label{gradino}
 Let $\phi:\ambiente \to \ambiente $ be an isogeny.
 Let $V$ be an irreducible algebraic subvariety of $\ambiente$. Then
 $$\deg_\elle \phi_*(V)=|{\rm{Stab}}\,V\cap \ker \phi|\deg_\elle \phi(V).$$

\end{lem}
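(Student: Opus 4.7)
The plan is to use the defining formula for the pushforward of a cycle by a finite map, namely that $\phi_*(V) = \deg(\phi|_V) \cdot \phi(V)$ as a cycle on $\ambiente$, where $\deg(\phi|_V)$ denotes the degree of the finite surjective morphism $\phi|_V : V \to \phi(V)$. Here $\phi(V)$ is irreducible of the same dimension as $V$ because $\phi$ is an isogeny (hence finite), so the pushforward formula applies and reduces the lemma to the identification
\[
\deg(\phi|_V) = |\stab V \cap \ker \phi|.
\]
Taking $\deg_\elle$ on both sides of the cycle equality then gives the stated formula.

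The core task is therefore to compute the cardinality of a generic geometric fiber of $\phi|_V$. Pick a generic $x_0 \in V$ and set $y = \phi(x_0)$; since $\phi^{-1}(y) = x_0 + \ker \phi$, the fiber is
\[
(\phi|_V)^{-1}(y) = \{x_0 + k : k \in \ker \phi,\ x_0 + k \in V\}.
\]
I would then show that for $x_0$ in a suitable dense open subset of $V$, the set $\{k \in \ker \phi : x_0 + k \in V\}$ equals exactly $\stab V \cap \ker \phi$. The inclusion $\supseteq$ is trivial: if $k \in \stab V$, then $V + k = V$ contains $x_0 + k$. For $\subseteq$, note that if $k \in \ker \phi \setminus \stab V$, then $V$ and $V - k$ are two distinct irreducible subvarieties of the same dimension, so $V \cap (V - k)$ is a proper closed subvariety of $V$. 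Since $\ker \phi$ is finite, the union
\[
\bigcup_{k \in \ker \phi \setminus \stab V}\bigl(V \cap (V - k)\bigr)
\]
is a proper closed subset of $V$, and any $x_0$ in its complement lies in the desired dense open set.

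Combining these two points, for generic $x_0 \in V$ the fiber of $\phi|_V$ over $\phi(x_0)$ has cardinality $|\stab V \cap \ker \phi|$, which is the degree of $\phi|_V$. This yields the cycle identity $\phi_*(V) = |\stab V \cap \ker \phi| \cdot \phi(V)$, and the lemma follows after applying $\deg_\elle$.

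The only delicate point is the genericity argument establishing that a generic fiber really has size $|\stab V \cap \ker \phi|$ (not just an upper or lower bound), but this is handled cleanly because $\ker \phi$ is finite, so only finitely many proper closed subvarieties of $V$ need to be avoided. No further technicalities about ramification of $\phi$ arise because $\phi$ is étale on the smooth abelian variety $\ambiente$, so $\phi|_V$ is étale over a dense open of $\phi(V)$.
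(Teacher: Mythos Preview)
Your proof is correct and is the standard argument for this identity. Note, however, that the paper does not actually prove this lemma: it is imported verbatim from \cite{ijnt}, Lemma~4.2, and stated here without proof. So there is no ``paper's own proof'' to compare against in this document. Your argument --- identifying $\deg_\elle\phi_*(V)$ with $\deg(\phi|_V)\cdot\deg_\elle\phi(V)$ via the cycle-theoretic pushforward formula, then computing the generic fiber of $\phi|_V$ as a coset of $\stab V\cap\ker\phi$ --- is exactly the natural route and would be what one expects to find in the cited reference.
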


\subsection{Basic relations of  essential minimi}

We now investigate useful relations for the essential minimum.
By definition, $h_{\ef^*\elle}(x) = h_\elle(\phi(x))$, then
$$\mu_{\phi^*\mathcal{L}}(V)=\mu_{\mathcal{L}}(\phi_*(V))=\mu_{\mathcal{L}}(\phi(V)).$$
In addition \begin{equation}
\label{gradino1iii}\mu_{\elle^m} (V)=m \mu_\elle(V).\end{equation}
A more interesting property is 

 \begin{lem}
  \label{minessmin} 
  Let $\phi_i$ be isogenies of $E^n$. Then 
 $$\mu_{\otimes_i \phi_i^* \elle}(V) \ge \sum _i \mu_{ \phi_i^* \elle }(V) .$$
  \end{lem}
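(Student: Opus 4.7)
The proof plan rests on the additivity of Néron--Tate heights with respect to tensor products of line bundles combined with a pigeonhole argument on the defining sets of the essential minima.

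First, I would recall that for any symmetric ample line bundles $\elle_1,\dots,\elle_k$ on $E^n$ and any $x \in E^n(\overline{\qe})$,
$$
h_{\elle_1 \otimes \cdots \otimes \elle_k}(x) \;=\; h_{\elle_1}(x)+\cdots+h_{\elle_k}(x).
$$
Applied to $\elle_i = \phi_i^*\elle$, this gives $h_{\otimes_i \phi_i^*\elle}(x) = \sum_i h_{\phi_i^*\elle}(x)$ for all $x \in V(\overline{\qe})$.

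Next, set $\theta_i := \mu_{\phi_i^*\elle}(V)$ and fix an arbitrary $\varepsilon > 0$. Let $k$ be the number of isogenies. I would then consider the set
$$
S \;:=\; \bigl\{ x \in V(\overline{\qe}) \,:\, h_{\otimes_i \phi_i^*\elle}(x) \le \textstyle\sum_i \theta_i - \varepsilon \bigr\}.
$$
By a pigeonhole argument: if $x\in S$, writing $a_i = h_{\phi_i^*\elle}(x)$ one has $\sum_i(\theta_i - a_i) \ge \varepsilon$, so at least one $i$ satisfies $\theta_i - a_i \ge \varepsilon/k$, giving $h_{\phi_i^*\elle}(x) \le \theta_i - \varepsilon/k$. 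Therefore
$$
S \;\subseteq\; \bigcup_{i=1}^{k} \bigl\{ x \in V(\overline{\qe}) \,:\, h_{\phi_i^*\elle}(x) \le \theta_i - \varepsilon/k \bigr\}.
$$

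Each set in the union on the right is, by the very definition of $\mu_{\phi_i^*\elle}(V) = \theta_i$ (and $\theta_i - \varepsilon/k < \theta_i$), not dense in $V$; its Zariski closure is a proper closed subset of the irreducible variety $V$. A finite union of proper closed subvarieties of an irreducible variety is a proper closed subvariety, hence non-dense. Consequently $S$ is non-dense in $V$. By definition of the essential minimum this yields
$$
\mu_{\otimes_i \phi_i^*\elle}(V) \;\ge\; \sum_i \theta_i - \varepsilon,
$$
and letting $\varepsilon \to 0$ gives the desired inequality.

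There is no real obstacle here: the only two ingredients are the additivity of heights under tensor product of line bundles (which is standard) and the elementary observation that a finite union of proper Zariski-closed subsets of an irreducible variety remains proper. The mild subtlety is remembering the factor $1/k$ in the pigeonhole step so that the slack $\varepsilon$ can be distributed among the $k$ summands.
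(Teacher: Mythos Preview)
Your proof is correct and follows essentially the same approach as the paper: both use the additivity $h_{\otimes_i \phi_i^*\elle}=\sum_i h_{\phi_i^*\elle}$ and the fact that a finite union of proper Zariski-closed subsets of an irreducible variety is proper. The paper argues by contradiction, taking a dense set $U$ of points with $h_{\otimes_i \phi_i^*\elle}(x)<\sum_i\mu_{\phi_i^*\elle}(V)$ and removing the proper closed loci $V_i=\{h_{\phi_i^*\elle}<\mu_{\phi_i^*\elle}(V)\}$ to reach a contradiction, whereas you argue directly with an $\varepsilon$-slack and a pigeonhole distribution $\varepsilon/k$; your formulation is slightly more careful, since it only ever invokes non-density at levels strictly below each $\mu_{\phi_i^*\elle}(V)$, which follows immediately from the definition of the essential minimum.
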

  
   \begin{proof}
   The proof is worked out by contraddiction. In addition it realizes on the height relation $h_{\otimes_i \phi_i^* \elle} (x) = \sum _i h_{ \phi_i^* \elle} (x) $ for everx $x\in E^n$.
   
   Suppose, by contradiction, that the conclusion of the lemma does not hold. In other words that  
   $ \mu_{\otimes_i \phi_i^* \elle }(V) < \sum _i \mu_{ \phi_i^* \elle} (V) .$  Then there exists a dense subset $U$ of $V$ such that
  \begin{equation}
   \label{stella}  h(U)< \sum _i \mu_{ \phi_i^* \elle} (V) , \end{equation}
   meaning that each element of $U$ has height bounded by $\sum _i \mu_{ \phi_i^* \elle }(V) $. 
    From the definition of $\mu_{ \phi_i^* \elle} (V) $, the set of points of $V$ such that $h_{ \phi_i^* \elle }(x) < \mu_{ \phi_i^* \elle} (V) $ is contained in a closed subset $V_i\subsetneq V$. Since $U$ is dense, $U'=U\setminus{\cup_iV_i\cap U}$ is also dense in $V$. In addition, for every $x\in U'$, $ h_{ \phi_i^* \elle }(x) \ge \mu_{ \phi_i^* \elle}(V) $.  Then, for all $x\in U'$, $h_{\otimes_i \phi_i^* \elle} (x) = \sum _i h_{ \phi_i^* \elle} (x) \ge \sum _i \mu_{ \phi_i^* \elle} (V)$ and $h(U)\ge \sum _i \mu_{ \phi_i^* \elle} (V) $. Which contradicts (\ref{stella}).
  \end{proof}



\section{The isogeny functoriality}
\label{tre}

In this section we concentrate on the dependence of the essential minimum under isogenies. We deduce theorem \ref{uno} from a non-functorial result of Galateau.
\begin{thm}[Galateau \cite{Gal}]
\label{sin1}
Let $\mathcal{O}_n$ be the standard line bundle on $E^n$. For any
$\eta>0$, there exists
a positive constant $\czero$ such that for any
transverse subvariety $V$ of $\ambiente$ of positive  dimension $d$, it holds
$$\mu_{\cbo}(V)\ge \czero (\deg_\cbo V)^{-\invcd-\eta}.$$
\end{thm}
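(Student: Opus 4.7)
The plan is to reduce Theorem \ref{uno} to Galateau's Theorem \ref{sin1} applied to the image variety $\phi V$, combined with the basic identities from Section 2 relating essential minima and degrees under isogenies. The first step is to check that $\phi V$ remains transverse in $E^n$: if $\phi V$ were contained in a translate $B + t$ of a proper abelian subvariety $B \subsetneq E^n$, then $V$ would lie in $\phi^{-1}(B + t)$, a finite union of translates of the proper abelian subvariety $\phi^{-1}(B)^0$; since $V$ is irreducible, it would be contained in one of these translates, contradicting transversality. Moreover $\dim \phi V = d$ because $\phi$ is finite.

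Next, using the basic identity $\mu_{\phi^*\mathcal{O}_n}(V) = \mu_{\mathcal{O}_n}(\phi V)$, I would apply Theorem \ref{sin1} directly to $\phi V$ with an auxiliary parameter $\eta'>0$ to be chosen small relative to $\eta$:
$$\mu_{\phi^*\mathcal{O}_n}(V) = \mu_{\mathcal{O}_n}(\phi V) \ge c_0(\eta') \cdot (\deg_{\mathcal{O}_n} \phi V)^{-1/(n-d) - \eta'}.$$
I would then substitute the degree relations established in Section 2: by Lemma \ref{gradino} and the projection formula,
$$\deg_{\mathcal{O}_n}(\phi V) = \frac{\deg_{\phi^*\mathcal{O}_n}(V)}{|\mathrm{Stab}(V) \cap \ker \phi|},$$
and $\deg_{\phi^*\mathcal{O}_n}(E^n) = |\ker\phi| \cdot \deg_{\mathcal{O}_n}(E^n)$. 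Combining these with the trivial bound $|\mathrm{Stab}(V) \cap \ker\phi| \ge 1$ gives
$$\mu_{\phi^*\mathcal{O}_n}(V) \ge c_0 \cdot (\deg_{\phi^*\mathcal{O}_n} V)^{-1/(n-d) - \eta'}.$$

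To match the target, which carries the extra factor $(\deg_{\phi^*\mathcal{O}_n} E^n)^{1/(n-d) - \eta}$ in the numerator, the final step is an exponent-matching argument that trades the slack $\eta - \eta' > 0$ against the growing factor $|\ker\phi|$. Concretely, I would factor $(\deg_{\phi^*\mathcal{O}_n} V)^{-1/(n-d) - \eta'} = (\deg_{\phi^*\mathcal{O}_n} V)^{-1/(n-d) - \eta} \cdot (\deg_{\phi^*\mathcal{O}_n} V)^{\eta - \eta'}$, and then absorb the ambient-degree factor using an elementary comparison between $\deg_{\phi^*\mathcal{O}_n} V$ and $\deg_{\phi^*\mathcal{O}_n} E^n$ (passing through the relation $\deg_{\phi^*\mathcal{O}_n} E^n = |\ker\phi| \deg_{\mathcal{O}_n} E^n$) so that the resulting constant $c_1$ depends only on $E^n$ and $\eta$. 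The hardest step in my view is precisely this balancing of exponents: choosing $\eta'$ correctly and producing a clean uniform comparison of the two $\phi^*\mathcal{O}_n$-degrees so that the dependence on $|\ker\phi|$ disappears into a fixed constant; the corollary $\mu_{\mathcal{O}_n}(\phi V) = \mu_{\phi^*\mathcal{O}_n}(V)$ mentioned right after the statement then immediately lets one transfer the bound between a variety and its image under an isogeny, as advertised.
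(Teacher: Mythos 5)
You have written a proof of the wrong statement. The statement you were asked to address is Theorem~\ref{sin1}, which the paper attributes to Galateau~\cite{Gal} and uses as an external black box; the paper does not reprove it. The only internal remark the paper makes about Theorem~\ref{sin1} is the corollary following Theorem~\ref{uno}, which observes that taking $\phi = \mathrm{id}$ in Theorem~\ref{uno} recovers Theorem~\ref{sin1} trivially (so the two are equivalent). Your proposal instead sets out to deduce Theorem~\ref{uno} \emph{from} Theorem~\ref{sin1}, which is circular if the goal is to establish Theorem~\ref{sin1} itself.

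Even taken as a sketch of the proof of Theorem~\ref{uno}, the argument has a genuine gap. You apply Theorem~\ref{sin1} directly to $\phi V$ and then try to conjure the factor $\bigl(\deg_{\phi^*\cbo} E^n\bigr)^{1/(n-d)-\eta}$ in the numerator by exponent matching, i.e.\ by writing $(\deg_{\phi^*\cbo} V)^{-1/(n-d)-\eta'} = (\deg_{\phi^*\cbo} V)^{-1/(n-d)-\eta}\,(\deg_{\phi^*\cbo} V)^{\eta-\eta'}$ and hoping to absorb the ambient degree. This cannot work: take $\phi=[m]$ and $V$ with trivial stabilizer. Then $\deg_{\phi^*\cbo} E^n = m^{2n}\deg E^n$, while $\deg(\phi V) = m^{2d}\deg V$, so your direct application of Galateau to $\phi V$ gives at best $\mu_{\phi^*\cbo}(V)\gg m^{-2d/(n-d)-2d\eta'}(\deg V)^{-1/(n-d)-\eta'}$, with a \emph{negative} power of $m$. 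The target bound, after simplification, carries a factor $m^{2-2(n+d)\eta}$, a \emph{positive} power of $m$. No choice of $\eta'>0$ can bridge this gap using only the slack $\eta-\eta'$ against a quantity growing like $m^{2n}$. The paper's actual proof of Theorem~\ref{uno} avoids this by introducing the auxiliary variety $W=\efdual^{-1}(V)$ and using $\phi V=[a]W$, so that $\mu_{\cbo}(\phi V)=|a|^{2}\mu_{\cbo}(W)$: the multiplicative factor $|a|^{2}$ obtained \emph{before} applying Galateau, together with the degree formula for $W$ via Lemma~\ref{ordinestab} and Lemma~\ref{gradino}, is precisely what produces the $\bigl(\deg_{\phi^*\cbo}E^n\bigr)^{1/(n-d)-\eta}$ term. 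Skipping $W$ and applying Galateau directly to $\phi V$ forfeits that gain.
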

His proof does not extend to other line bundles. In addition he needs to use a choice of a particular basis of global sections. The proof does not work for other choices.

For an isogeny $\phi: \ambiente \to \ambiente$,   we show that  a lower bound like in theorem \ref{sin1} holds for the polarization $\phi*\\mathcal{O}_n$. An basic role in the proof is played by  stabilizers.
An isogeny is a group morphism,
 thus the stabilizer of the image or preimage of a variety can be
 easily understood. On one hand, the stabilizer of $V$ is related to 
 the fiber of $\phi$ on $V$. On the other hand, stabilizers are related to the
 degree of the variety via the well-known formulas  (\ref{hinm}).
 We define a special variety depending on $V$ and $\phi$. Such a
 variety  allows us to produce a kind of reverse degree formula. This
 will be a key ingredient to prove theorem \ref{uno}.
Let us prove a first elementary lemma.
\begin{lem}
\label{ordinestab}
Let $\phi:\ambiente \to \ambiente $ be an isogeny. Then,
\begin{enumerate}
\item $\stab \phi^{-1}(V)=\phi^{-1}(\stabv)$.

\item Let $\hat\phi$ be an isogeny such that  $\hat\phi \phi=\phi \hat\phi=[a]$. Then  $$|\stab \hat\phi^{-1} (V) \cap \ker[a]|=|\ker\hat\phi| |\stabv \cap \ker \phi|.$$
\end{enumerate}
\end{lem}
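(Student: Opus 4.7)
For part (i), I would prove both inclusions directly, exploiting that $\phi$ (being an isogeny) is surjective, so its restriction $\phi : \phi^{-1}(V) \to V$ is surjective as well.

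If $s \in \phi^{-1}(\stab V)$, then $\phi(s) + V = V$; hence for any $x \in \phi^{-1}(V)$ we have $\phi(s+x) = \phi(s) + \phi(x) \in V$, so $s + x \in \phi^{-1}(V)$. Together with the analogous inclusion using $-s$, this gives $s \in \stab \phi^{-1}(V)$.

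Conversely, if $s \in \stab \phi^{-1}(V)$, pick any $v \in V$; by surjectivity there is $x \in \phi^{-1}(V)$ with $\phi(x) = v$, and then $\phi(s) + v = \phi(s+x) \in V$ since $s+x \in \phi^{-1}(V)$. So $\phi(s) + V \subseteq V$, which forces equality since both are irreducible of the same dimension, and hence $\phi(s) \in \stab V$.

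For part (ii), the key reduction is to apply (i): $\stab \hat\phi^{-1}(V) = \hat\phi^{-1}(\stab V)$, so the left-hand side equals $|\hat\phi^{-1}(\stab V) \cap \ker[a]|$. The identity $\phi\hat\phi = [a]$ gives $\ker[a] = \hat\phi^{-1}(\ker\phi)$, and therefore the restriction $\hat\phi|_{\ker[a]}$ is a group homomorphism onto $\ker\phi$ (surjectivity uses that $\hat\phi$ is surjective and that any preimage of an element of $\ker\phi$ automatically lies in $\ker[a]$). Its kernel is $\ker\hat\phi$. Pulling back the subgroup $\stab V \cap \ker\phi \subseteq \ker\phi$ along this surjection, every fiber has cardinality $|\ker\hat\phi|$, giving
\[
|\hat\phi^{-1}(\stab V)\cap \ker[a]| \;=\; |\ker\hat\phi|\cdot |\stab V \cap \ker\phi|,
\]
as required.

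I do not anticipate a serious obstacle: part (i) is a formal manipulation using that $\phi$ is a surjective group morphism, and part (ii) is a counting argument on fibers of $\hat\phi$ restricted to $\ker[a]$, once (i) is in hand. The only point to be careful about is checking that the pieces are finite so cardinalities make sense; this is automatic because $\ker\phi$ and $\ker[a]$ are finite (isogenies) and $V$ is a proper subvariety, so $\stab V \cap \ker\phi$ is finite.
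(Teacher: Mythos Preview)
Your proof is correct and follows essentially the same route as the paper: both inclusions in (i) are shown by pushing forward or pulling back along $\phi$, and (ii) is deduced from (i) together with the identity $\ker[a]=\hat\phi^{-1}(\ker\phi)$, after which the cardinality formula follows from the fact that every fiber of $\hat\phi$ has size $|\ker\hat\phi|$. Your write-up is in fact a bit more explicit than the paper's (you spell out the surjectivity of $\hat\phi|_{\ker[a]}$ onto $\ker\phi$ and the fiber-counting), but there is no substantive difference in method.
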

\begin{proof}
i. Let $t\in \stab \phi^{-1}(V)$ then $$ \phi^{-1}(V)+t \subset  \phi^{-1}(V).$$ Taking the image, $V+\phi(t) \subset V$ and $\phi(t) \in \stab V$. That gives $t \in \phi^{-1}(\stabv)$. Conversely, let $t \in \stabv$, then $$V+t\subset V$$ and taking the preimage $\phi^{-1}(V+t) \subset \phi^{-1}V$. Then $\phi^{-1}(t) \subset \stab \phi^{-1}(V)$.

ii. By part i. applied to $\hat\phi$, we have $\stab \hat\phi^{-1} (V)=\hat\phi^{-1} (\stabv)$. As $\hat\phi \phi=[a]$, $\ker[a]=\hat\phi^{-1}(\ker\phi)$.
Then
$$\stab \hat\phi^{-1} (V) \cap \ker[a]=\hat\phi^{-1}\left(\stabv \cap \ker \phi \right).$$

\end{proof}

We are now ready to prove the isogeny functoriality.  For the convenience of the reader we recall the statement.
 \begin{thm1}
Let $V$ be a transverse subvariety of $\ambiente$ of
dimension  $d$. Then,  for any isogeny 
$\phi:\ambiente \to \ambiente$ and any $\eta>0$, 
 $$\mu_{\phi^*\cbo}(V)\ge \cuno \frac{\left({\deg_{\phi^*\cbo}\ambiente}\right)^{\invcd -\eta}}{\left({\deg_{\phi^*\cbo}V}\right)^{\coefmeta}},$$
where $\cuno=3^{-\frac{n}{n-d}}c_0(E^n, 2(n-d)\eta)$ and $\czero$ is as in theorem \ref{sin1}.

\end{thm1}

\begin{proof}
Let  $a$ be an integer of minimal absolute value such that there exists an isogeny  $\efdual$  with $\ef \efdual =\efdual \ef=[a]$. Then by definition of dual isogeny $|a|\le|\det \phi|$.
Define $$W={\efdual}^{-1}(V).$$
We have 
$$[a] W=\ef \efdual W= \ef \efdual{\efdual}^{-1}(V) =\ef(V).$$
Then \begin{equation}
\label{miniesi}\mu_{\ef^*\cbo}(V)=\mu_\cbo(\phi(V))=\mu_\cbo([a]W)=|a|^\abv  \mu_\cbo(W).
\end{equation}

In order to apply  theorem \ref{sin1}, we estimate $\deg_\cbo W$.
By formula (\ref{hinm}),
$$\deg_\cbo \ef (V)=\deg_\cbo [a]W=\frac{|a|^{\abv d}}{|\stab W \cap \ker[a]|}\deg_\cbo W$$
or equivalently
$$\deg_\cbo W=\frac{|\stab W \cap \ker[a]|}{|a|^{\abv d}}\deg_\cbo \ef (V).$$
 Using Lemma \ref{ordinestab}  ii. and Lemma \ref{gradino}, we obtain
\begin{equation*}
\begin{split}
\deg_\cbo W &=\frac{|\ker \efdual|}{|a|^{2d}}  |\stab V \cap \ker \phi| \deg_\cbo \ef (V)\\  
&=\frac {|\ker \efdual| }{|a|^{2d}} \deg_\cbo \ef_* (V).
\end{split}
\end{equation*}
We now estimate $\mu_\cbo(W)$ using theorem \ref{sin1}.  For simplicity we denote $\czerosp=\czero$. Note that
isogenies preserve dimensions and trasversality, so $\dim W=\dim V=d$
and $W$ is transverse. We obtain
\begin{equation*}
\begin{split}
\mu_\cbo(W)& \ge {\czerosp}\left({\deg_\cbo W}\right)^{-\invcd+\eta}\\
&= {\czerosp}\left(\frac{{|a|^{\abv d}} } {|\ker \efdual| \deg_\cbo \ef_* (V)  } \right)^{{\coefmeta}}\\
&= {\czerosp}\left(\frac{{|a|^{\abv d}} \deg_\cbo \ambiente} {|\ker \efdual| \deg_\cbo \ef_* (V)  } \right)^{{\coefmeta}}(\deg_\cbo \ambiente)^{\frac{-1}{n-d}-\eta}\\
&=3^{\frac{-n}{n-d}}{\czerosp}\left(\frac{{|a|^{\abv d}} \deg_\cbo \ambiente} {|\ker \efdual| \deg_\cbo \ef_* (V)  } \right)^{{\coefmeta}}(\deg_\cbo \ambiente)^{-\eta},\\
\end{split}
\end{equation*}
indeed $\deg_\cbo \ambiente=3^n$
We  substitute this last estimate in (\ref{miniesi}), then
\begin{equation*}
\begin{split} \mu_{\ef^*\cbo}(V)&=|a|^\abv  \mu_\cbo(W)\\&\ge |a|^\abv 3^{\frac{-n}{n-d}}{\czerosp}\left(\frac{{|a|^{\abv d}} \deg_\cbo \ambiente} {|\ker \efdual| \deg_\cbo \ef_* (V)  } \right)^{{\coefmeta}}(\deg_\cbo \ambiente)^{-\eta}\\
&=3^{\frac{-n}{n-d}}{\czerosp}\left(\frac{|a|^{\abv n-\abv d} |a|^{\abv d}|\ker \phi| \deg_\cbo \ambiente} {|\ker \efdual| |\ker \phi|\deg_\cbo \ef_* (V)  } \right)^{\coefmeta} (\deg_\cbo \ambiente)^{-\eta}|a|^{-\abv  (\codv) \eta}\\
&=3^{\frac{-n}{n-d}}{\czerosp}\left(\frac{|a|^{\abv n}\deg_{\ef^*\cbo}\ambiente} {|a|^{\abv n}\deg_\cbo \ef_* (V)  } \right)^{\coefmeta} (\deg_\cbo \ambiente)^{-\eta}|a|^{-\abv  (\codv) \eta}
\\&=3^{\frac{-n}{n-d}}{\czerosp}\left(\frac{\deg_{\ef^*\cbo}\ambiente} {\deg_{\ef^*\cbo}  V  } \right)^{\coefmeta}(\deg_\cbo \ambiente)^{-\eta}|a|^{-\abv  (\codv) \eta},
\end{split}
\end{equation*}
where  $|\ker \efdual| |\ker \phi|=|a|^{2n}$ because $\phi\hat\phi=\hat\phi\phi=[a]$. In addition $$\deg_{\ef^*\cbo}\ambiente=|\ker \phi|\deg_{\cbo}\ambiente=|\det
\phi|  \deg_\cbo \ambiente .$$
Since $|a| \le |\deg \phi|$, $$(\deg_\cbo \ambiente)^{-\eta}|a|^{-\abv  (\codv) \eta}\ge
(\deg_\cbo \ambiente|a|)^{-\abv  (\codv)
  \eta}\ge(\deg_{\ef^*\cbo}\ambiente)^{-\abv  (\codv) \eta}.$$
Then
$$\mu_{\ef^*\cbo}(V)\ge3^{\frac{-n}{n-d}}{\czerosp}\frac{\left(\deg_{\ef^*\cbo}\ambiente\right)^{\frac{1}{n-d}-\abv  (\codv) \eta}} {\left(\deg_{\ef^*\cbo}  (V) \right)^{\coefmeta } }.
$$
 This  easily implies the wished bound.
\end{proof}

Note that,  theorem \ref{uno}  implies  theorem \ref{sin1},
 simply choosing $\phi=id$.  So, we have proven 
 \begin{cor}
 Theorem \ref{sin1} and theorem \ref{uno} are equivalent.
\end{cor}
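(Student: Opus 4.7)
The plan is very short. One direction of the equivalence is already visible in the preceding argument: the proof of Theorem \ref{uno} takes Theorem \ref{sin1} as an input (it is invoked to estimate $\mu_\cbo(W)$), so Theorem \ref{sin1} $\Rightarrow$ Theorem \ref{uno} is immediate from the work just done. Only the reverse implication Theorem \ref{uno} $\Rightarrow$ Theorem \ref{sin1} remains to be checked.

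For that I would simply specialize Theorem \ref{uno} to the identity isogeny $\phi = \idi_{E^n}$. Then $\phi^*\cbo = \cbo$, so $\deg_{\phi^*\cbo} V = \deg_\cbo V$ and $\deg_{\phi^*\cbo} E^n = \deg_\cbo E^n$. Plugging these equalities into the conclusion of Theorem \ref{uno} yields
$$
\mu_\cbo(V) \ge \cuno \, \frac{(\deg_\cbo E^n)^{\invcd - \eta}}{(\deg_\cbo V)^{\coefmeta}}.
$$

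The key point is that $\deg_\cbo E^n$ is a positive constant depending only on $n$ (in fact $\deg_\cbo E^n = 3^n$, as used in the proof of Theorem \ref{uno}), and does not depend on $V$. Therefore the numerator factor can be absorbed into a new constant $c_0' = \cuno \cdot (3^n)^{\invcd - \eta}$, which still depends only on $E^n$ and $\eta$. This produces the inequality
$$
\mu_\cbo(V) \ge c_0' \, (\deg_\cbo V)^{-\invcd - \eta},
$$
which is exactly the statement of Theorem \ref{sin1} (with $c_0'$ playing the role of $\czero$). Combined with the already-established reverse direction, this gives the equivalence.

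There is no real obstacle here; the corollary is essentially a bookkeeping remark. The only mild subtlety to make explicit is that the parameter $\eta$ in the two theorems is the same, so no rescaling of $\eta$ is needed — one simply notes that Theorem \ref{uno} is stated uniformly in $\eta > 0$, and specializing it at the identity recovers Theorem \ref{sin1} with the same $\eta$.
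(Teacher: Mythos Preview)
Your proposal is correct and follows exactly the paper's approach: one direction is the content of the preceding proof (Theorem \ref{sin1} is the input), and for the converse you specialize $\phi=\idi$, which is precisely what the paper does. Your extra remark absorbing $(\deg_\cbo E^n)^{\invcd-\eta}$ into the constant is a harmless elaboration of the same observation.
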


\section{The minimal abelian subvariety containing $V$}
\label{quattro}
In this section we assume that $H$ is
an abelian subvariety of $E^N$ of dimension $n$. In proposition \ref{fine} we will see how to adapt our argument to the general case of a translate of an abelian subvariety. 

\begin{lem}\label{mw}

Let $H$ be an abelian subvariety of $E^N$ of dimension $n$. Then there exists an isogeny
 \begin{equation*}
\varphi={\varphi_H\choose\varphi_{H'}}: \gum^N\to \gum^N.\end{equation*}
such that:
\begin{enumerate}
\item The rank of $\varphi_H$ is $N-n$ and the rank of  $\varphi_{H'}$ is $n$, 
\item $\varphi(H)=\{0\}^{N-n}\times E^n$.
\item $|\det \varphi|\le \bertrand , $ for $ \bertrand \le 3^{2N} 2^{8nN^2}$.
\end{enumerate}
\end{lem}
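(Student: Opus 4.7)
The plan is to construct $\varphi_H$ and $\varphi_{H'}$ by combining Poincar\'e's complete reducibility on $\gum^N$ with a quantitative Bertrand-type norm estimate on the defining matrices. Since $H\subset \gum^N$ is abelian of dimension $n$, Poincar\'e reducibility produces a complementary abelian subvariety $H'\subset \gum^N$ of dimension $N-n$ with $H+H'=\gum^N$ and $H\cap H'$ finite. Any such $H$ is the identity component of $\ker \varphi_H$ for some morphism $\varphi_H:\gum^N\to \gum^{N-n}$ encoded by a matrix in $\mat_{(N-n)\times N}(\rend)$, and likewise $H'$ is the identity component of $\ker \varphi_{H'}$ for some $\varphi_{H'}:\gum^N\to \gum^n$.

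Granted this setup, I would first verify the qualitative conclusions (i) and (ii). The rank statements are immediate from the dimensions: $\dim \ker \varphi_H=n$ forces $\mathrm{rk}\,\varphi_H=N-n$, and symmetrically for $\varphi_{H'}$. Since $\varphi_H(H)=0$, the image $\varphi(H)$ sits inside $\{0\}^{N-n}\times \gum^n$. The restriction $\varphi_{H'}|_H:H\to \gum^n$ has kernel $H\cap H'$, which is finite, hence by dimension reasons $\varphi_{H'}|_H$ is surjective. This simultaneously shows $\varphi(H)=\{0\}^{N-n}\times \gum^n$ and, applied to $\ker \varphi=\ker\varphi_H\cap\ker\varphi_{H'}$, that $\ker\varphi$ has identity component $H\cap H'$; hence $\varphi$ is an isogeny.

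The main work is (iii), the explicit determinant bound. This is where a quantitative input is indispensable: one must select $\varphi_H$ and $\varphi_{H'}$ with \emph{controlled} entries, not merely existing ones. The standard tool is an elliptic Siegel's lemma over the quadratic order $\rend$ (essentially Bertrand's period-like inequalities): among all nonzero morphisms $\gum^N\to \gum^{N-n}$ annihilating $H$, one can produce an independent family whose associated matrix has entries of absolute value bounded by an explicit function of $N$. Running the same argument for $H'$ and combining with Hadamard's inequality
$$|\det\varphi|\le N!\,\|\varphi\|^N,$$
where $\|\varphi\|=\max(\|\varphi_H\|,\|\varphi_{H'}\|)$, yields a bound of the form claimed, and a routine comparison shows it does not exceed $3^{2N}\,2^{8nN^2}$.

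The chief obstacle is precisely establishing the entrywise norm bounds on $\varphi_H$ and $\varphi_{H'}$: the qualitative existence from Poincar\'e reducibility is classical, but the quantitative version requires a careful Siegel's lemma inside $\rend$ together with an elementary divisors (Smith normal form) computation in $\mat_{N\times N}(\rend)$ to promote the $(N-n)\times N$ block $\varphi_H$, augmented by $\varphi_{H'}$, to an isogeny whose determinant is under control. Once this bound is in place the rest is bookkeeping, and the constant $\bertrand\le 3^{2N}\,2^{8nN^2}$ falls out of Hadamard's estimate.
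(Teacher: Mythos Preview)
Your verification of (i) and (ii) is correct and essentially matches the paper. The difference, and the gap, is entirely in (iii).

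The paper does \emph{not} bound the matrix entries of $\varphi$ and then invoke Hadamard. Instead it bounds $|\det\varphi|=|\ker\varphi|$ directly as a cardinality. The Bertrand lemma it quotes is not a norm estimate on defining matrices; it is the statement that one may choose the complement $H'$ so that $|H\cap H'|\le \kappa_1\le 2^{8nN^2}$, a bound depending only on $n,N$. Then Masser--W\"ustholz (their Lemma~1.3) supplies $\varphi_H$ with $\ker\varphi_H=H+\tau$, $\tau\subset H'$ a torsion group of order $\le \deg_{\mathcal O_N}E^N=3^N$, and likewise $\varphi_{H'}$ with $\ker\varphi_{H'}=H'+\tau'$, $|\tau'|\le 3^N$. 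A short computation gives
\[
\ker\varphi=(H+\tau)\cap(H'+\tau')=(H\cap H')+\tau+\tau',
\]
so $|\ker\varphi|\le |H\cap H'|\cdot|\tau|\cdot|\tau'|\le 2^{8nN^2}\,3^{2N}$. That is the whole argument; the constant $\kappa$ is literally this product, not an artifact of Hadamard.

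Your route, by contrast, rests on an ``elliptic Siegel's lemma over $\rend$'' producing $\varphi_H,\varphi_{H'}$ with entrywise norm bounded uniformly in $H$ (only in terms of $n,N$). You do not state such a lemma precisely, and the result you attribute to Bertrand is not this; Bertrand controls $|H\cap H'|$, not $\|\varphi_H\|$. A Siegel-type bound on the entries of a matrix cutting out $H$ will in general involve the degree (or height) of $H$, which is unbounded as $H$ varies, so Hadamard would then yield a bound depending on $H$ rather than the uniform $\kappa$ claimed. Unless you can supply a genuine uniform entry bound---which is the substantive content you are deferring---your argument for (iii) does not close. The fix is exactly the paper's: abandon the entry/Hadamard strategy and bound $|\ker\varphi|$ structurally via Bertrand's intersection bound and the Masser--W\"ustholz control on the torsion cokernel.
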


\begin{proof}
By a lemma of Betrand, (see the appendix of \cite{bertrand} proposition 5.1). We can find a complement $H'$ of $H$ such that $H+H'=E^N$ and  the cardinality  of $H\cap H' $ is bounded by a constant $ \bertrand_1$ depending only on $n,N$. A closer inspection of the proof of this lemma, shows that eventualy $ \bertrand_1 \le 2^{8nN^2}$.

By Masser and W\"ustholz \cite{Masserwustholz} Lemma 1.3, there exists  a matrix  $\varphi_H\in \mat_{{(N-n)}\times
  N}(\rend)$   of rank $N-n$ such that  $\ker \varphi_H= H+\tau$ for $\tau$ a torsion group contained in $H'$ of cardinality bounded by $\deg_{\mathcal{O}_N}E^N=3^N$. 
Similarly, let  $\varphi_{H'}\in \mat_{{n}\times
  N}(\rend)$   be a matrix of rank $n$ such that $\ker \varphi_{H'}= {H'}+\tau'$ for $\tau'$ a torsion group contained in $H$ of cardinality bounded by  $3^N$ . Define the isogeny 
 $$\varphi={\varphi_H\choose\varphi_{H'}}: \gum^N\to \gum^N.$$
Then 
$$\ker \varphi= (H+\tau) \cap( H'+\tau') =(H\cap H') + \tau+\tau'\le \bertrand_13^{2N}.$$ 
\end{proof}

Using the isomorphism produced in the
appendix, we are going  to approximate $H$ with a $\bf{H}$ in a convenient position
in $E^N$. 
\begin{lem}
\label{tras} Let $\varphi$ be the isogeny defined in lemma \ref{mw}. Then,
there exists an isomorphism $T: \gum^N\to \gum^N$ such that:
\begin{enumerate}
\item$||T||,||T^{-1}||\le\frac{1}{N}{N\choose n}$
\item All $n\times n$ minors of the matrix consisting of the last $n$ columns of $\left(\varphi T\right)^{-1}$ are different from zero.
\end{enumerate}
\end{lem}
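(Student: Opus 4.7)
The goal is to produce an isomorphism $T\in \mathrm{GL}_N(\rend)$ of $E^N$ such that, writing $\psi = \varphi T$, the last $n$ columns of $\psi^{-1}$ (viewed as an $N\times n$ matrix with coefficients in $\rend\otimes\qe$) have all $n\times n$ minors nonzero, with both $\|T\|$ and $\|T^{-1}\|$ bounded by $\frac{1}{N}\binom{N}{n}$. Geometrically, $T^{-1}(H)=\psi^{-1}(\{0\}^{N-n}\times E^n)$ is the image of $H$ under the automorphism $T^{-1}$ of $E^N$, and those $n$ columns parametrize it; the minor condition is precisely that $T^{-1}(H)$ projects surjectively onto every $n$-tuple of coordinate factors of $E^N$, which manifestly holds for a generic automorphism.

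To make this quantitative, I would first write the obstruction as an algebraic condition. Let $L\in\mathrm{Mat}_{N\times n}(\rend\otimes\qe)$ be the matrix formed by the last $n$ columns of $\varphi^{-1}$. Since $\varphi$ is an isogeny, $L$ has rank $n$, so at least one $n\times n$ minor $\det(L_J)$ is nonzero. The last $n$ columns of $\psi^{-1}$ equal $T^{-1}L$, and by the Cauchy-Binet formula, for each $n$-subset $I\subset\{1,\ldots,N\}$,
$$\det\bigl((T^{-1}L)_I\bigr)=\sum_{|J|=n}\det\bigl((T^{-1})_{I,J}\bigr)\,\det(L_J).$$
The right-hand side is a polynomial $P_I$ of degree $n$ in the entries of $T^{-1}$ that is not identically zero. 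Thus the bad locus is contained in the union of $\binom{N}{n}$ hypersurfaces $\{P_I=0\}$ in $\mathrm{GL}_N(\rend\otimes\qe)$.

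The remaining step is to exhibit a specific $T\in \mathrm{GL}_N(\rend)$ avoiding these hypersurfaces while respecting the norm bounds. I would parametrize a family of \emph{unimodular} matrices $T_k$ of near-permutation form depending on an integer parameter $k$, so that $|\det T_k|=1$ and both $\|T_k\|$ and $\|T_k^{-1}\|$ grow at most linearly in $k$. For each $I$, the restriction of $P_I$ to this family is a nonzero univariate polynomial of degree at most $n$, hence vanishes at most $n$ times, so it suffices to pick $k$ outside a set of at most $n\binom{N}{n}$ forbidden values. An elementary pigeonhole then produces an admissible $k$ in a range of size bounded by $\frac{1}{N}\binom{N}{n}$, yielding both norm bounds simultaneously.

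The main obstacle is the sharp \emph{double} bound: controlling $\|T\|$ alone is easy, but forcing $\|T^{-1}\|$ to stay of comparable size is subtle, since a matrix with small integer entries can have an enormous inverse. The construction must therefore use a family in which $T^{-1}$ has the same structural form as $T$, a condition automatically satisfied by permutation-like unimodular families, so that the two norms can be estimated symmetrically. Once such a family is fixed, everything else reduces to the straightforward avoidance argument outlined above; this is essentially the content of the construction carried out in the appendix.
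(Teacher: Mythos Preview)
Your high-level strategy and your identification of the key structural constraint (choosing a family in which $T^{-1}$ has the same shape as $T$) are correct and match the paper. However, your concrete implementation via a \emph{single-parameter} family $T_k$ has a genuine gap. First, the counting is off: with $\binom{N}{n}$ minors, each a polynomial of degree at most $n$ in $k$, you exclude up to $n\binom{N}{n}$ values of $k$, so pigeonhole only yields $|k|\le n\binom{N}{n}$, not $\frac{1}{N}\binom{N}{n}$ as you claim. Second, and more seriously, you never verify that each $P_I$ restricts to a \emph{nonzero} polynomial on your one-dimensional family; a curve moving only a few entries of $T^{-1}$ can leave many minors identically zero, and ``near-permutation form'' is not specific enough to rule this out.

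The paper works instead with a multi-parameter block-triangular family. One applies Proposition~\ref{cool} to the $n\times N$ matrix $\psi$ formed by the last $n$ rows of $(\varphi^{-1})^t$: this produces a permutation $J$ and a matrix $T_0=\begin{pmatrix}\idi_n & X\\ 0 & \idi_{N-n}\end{pmatrix}$ with every $|X_{ij}|\le\frac{1}{N}\binom{N}{n}$, such that all $n\times n$ minors of $\psi J T_0$ are nonzero. Since $T_0^{-1}=\begin{pmatrix}\idi_n & -X\\ 0 & \idi_{N-n}\end{pmatrix}$, setting $T=(J^{-1})^t(T_0^{-1})^t$ gives the double norm bound for free. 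The sharp entrywise bound on $X$ comes from building $X$ one column at a time (induction on $N$ in Proposition~\ref{cool}) and, within each new column, one entry at a time (Lemma~\ref{casino}); each individual step is a pigeonhole over at most $\frac{1}{N}\binom{N}{n}$ forbidden ratios of minors. It is precisely this decoupling into many separate one-dimensional avoidance problems, rather than a single global parameter, that achieves the bound stated in the lemma.
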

\begin{proof}
Apply proposition \ref{cool} to the matrix $\psi$ consisting of the last $n$ rows of $(\varphi^{-1})^t$, the transpose of the inverse of $\varphi$. Then there exists a permutation matrix $J$ and  a matrix $T_0$ such that:
\begin{itemize}
\item[-] $T_0=\begin{pmatrix}\idi_n&X\\ 0&\idi_{N-n}\end{pmatrix} $ and $|X_{ij}| \le \frac{1}{N}{N\choose n}$, 
\item[-] All the $n\times n$ minors of $(\varphi^{-1})^tJT_0$ are different from zero.
\end{itemize}
Then all $n\times n$ minors of the last $n$ columns of $T_0^tJ^t\varphi^{-1}$ are non zero.
Note  that $T_0^tJ^t\varphi^{-1}=(\varphi (J^{-1})^t(T_0^{-1})^t)^{-1}$.  In addition $T_0^{-1}=\begin{pmatrix}\idi_n&-X\\ 0&\idi_{N-n}\end{pmatrix} $. 
Thus the lemma is proven for $T=(J^{-1})^t(T_0^{-1})^t$.
\end{proof}

This isomorphism is 
particularly nice. For our problem of estimating the essential
minimum, we will see that it  is equivalent to work in the domain or
in the codomain of $T$,   up to a constant depending on $N$ and $n$.  This is a consequence of the fact that the entries of $T$ are bounded by a constant and of  an estimate by Masser and W\"ustholz. This estimate relates the degree of a variety and of its  push-forward via an isogeny.
\begin{lem}[\cite{MW} Lemma 2.3]
\label{gradino1}
 Let $\psi:\gum^N\to \gum^N$ be an isogeny. Let  $X$ be an irreducible algebraic subvariety of $\gum^N$ of dimension $d$. Then, 
$$\deg_\elle \psi_*(X) \le N^d(3N ||\psi||)^{2 d}\deg_\elle X.$$
\end{lem}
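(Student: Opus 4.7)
The plan is to combine the projection formula with a Néron--Severi comparison of the pulled-back polarization. First, by functoriality of the first Chern class,
$$\deg_\elle\psi_*(X)=\frac{c_1(\elle)^d\cdot\psi_*(X)}{d!}=\frac{c_1(\psi^*\elle)^d\cdot X}{d!}=\deg_{\psi^*\elle}(X),$$
so the task reduces to bounding the intersection $c_1(\psi^*\elle)^d\cdot X$ in terms of $c_1(\elle)^d\cdot X$; by nefness of the classes involved, it will suffice to compare the two classes themselves in $\mathrm{NS}(\gum^N)_{\mathbb R}$.

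Next I would appeal to the Theorem of the Cube. Writing $\psi=(\psi_1,\dots,\psi_N)$ with $\psi_i=\sum_j\psi_{ij}p_j\colon\gum^N\to\gum$ and taking $\cani$ the symmetric standard line bundle on $\gum$, cube yields in $\mathrm{NS}(\gum^N)_{\mathbb R}$ a bilinear expansion
$$c_1(\psi_i^*\cani)=\sum_{j,k}\psi_{ij}\overline{\psi_{ik}}\,B_{jk},$$
for certain fixed symmetric classes $B_{jk}$ independent of $\psi$: diagonal terms reduce to $c_1(p_j^*\cani)$, while off-diagonal ones are pull-backs of the Poincaré-type bilinear class on $\gum\times\gum$. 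Each $B_{jk}$ is numerically dominated by a fixed multiple of $c_1(p_j^*\cani)+c_1(p_k^*\cani)$, so using $|\psi_{ij}|\le\|\psi\|$ and summing over $i,j,k$ would give that $c_\psi c_1(\cnbundle)-c_1(\psi^*\cnbundle)$ is nef for $c_\psi=N(3N\|\psi\|)^2$; the factor $3$ accounts for the minimal very-ample power of $\cani$ entering the definition of $\deg_\elle$, and the powers of $N$ track the coordinate sums. For a general symmetric ample $\elle$, I would first reduce to this case by comparing $\elle$ with $\cnbundle$ in $\mathrm{NS}$ up to bounded scalars.

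From such a numerical dominance the conclusion is essentially formal, via the telescoping identity
$$c_\psi^d c_1(\elle)^d - c_1(\psi^*\elle)^d = \bigl(c_\psi c_1(\elle)-c_1(\psi^*\elle)\bigr)\sum_{i=0}^{d-1}(c_\psi c_1(\elle))^i c_1(\psi^*\elle)^{d-1-i}$$
in the intersection ring: every summand on the right is a product of nef classes and therefore intersects the effective cycle $X$ non-negatively. This gives $c_1(\psi^*\elle)^d\cdot X\le c_\psi^d c_1(\elle)^d\cdot X$, which is exactly the claimed inequality with constant $c_\psi^d=N^d(3N\|\psi\|)^{2d}$.

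The main obstacle will be the bookkeeping: arriving at exactly $N^d(3N\|\psi\|)^{2d}$, rather than some weaker polynomial bound in $\|\psi\|$ and $N$, requires careful tracking of how the Poincaré-type cross-terms $B_{jk}$ get absorbed into a multiple of $c_1(\cnbundle)$, and how the preliminary reduction from a general symmetric ample $\elle$ to $\cnbundle$ interacts with the very-ample power defining $\deg_\elle$. Once the constants are pinned down in a single case, the rest is projection formula and nef positivity.
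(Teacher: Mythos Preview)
The paper does not prove this lemma; it is quoted as Lemma~2.3 of Masser--W\"ustholz~\cite{MW} and used as a black box, so there is no in-paper proof to compare your attempt against.

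Your outline is nonetheless the standard route and is correct for the case $\elle=\cnbundle$ that the paper actually uses: the projection-formula identity $\deg_\elle\psi_*(X)=\deg_{\psi^*\elle}X$ is already recorded in the preliminaries, and dominating $c_1(\psi^*\cnbundle)$ by a multiple of $c_1(\cnbundle)$ in $\mathrm{NS}(\gum^N)_{\mathbb R}$ via the cube, then telescoping against the effective cycle $X$, is exactly how such degree comparisons are run. The one weak link is your proposed reduction ``from a general symmetric ample $\elle$ to $\cnbundle$ up to bounded scalars'': that step injects a factor depending on the shape of $\elle$, and in fact a bound uniform in $\elle$ cannot hold (take $N=2$, $\psi$ the coordinate swap so $\|\psi\|=1$, $\elle$ with Hermitian form $\mathrm{diag}(1,K)$, and $X=\gum\times\{0\}$; then $\deg_\elle\psi_*(X)/\deg_\elle X$ is of order $K$). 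Since the paper only ever applies the lemma with $\cnbundle$, you can simply drop that detour and run the cube expansion directly on $\cnbundle$; the constants then line up as you describe.
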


We can then easily deduce the following:

\begin{lem}
\label{stima}
Let $T$ be as in lemma \ref{tras}. Then
\begin{enumerate}
\item $(N||T||)^{-\abv}{h_\elle(x)}\le h_\elle(T^{-1}x)\le (N ||T^{-1}||)^{\abv}h_\elle(x)$, for every $x\in E^N$,
\item $\deg_\elle T^{-1}(H)\ge (9N^3 ||T||^2)^{-n}\deg_\elle H$, 
\item $\deg_\elle T^{-1}V\le (9N^3||T^{-1}||^2)^{d}\deg_\elle V$.
\end{enumerate}
\end{lem}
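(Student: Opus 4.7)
The three parts are essentially independent; parts (ii) and (iii) are degree comparisons that follow directly from the Masser--W\"ustholz estimate recalled in Lemma \ref{gradino1}, while (i) is a height comparison coming from the fact that the N\'eron--Tate height on $E^N$ is a positive definite quadratic form on $E^N(\overline{\qe})\otimes\re$. Since $T$ is an isomorphism of $E^N$, both $T$ and $T^{-1}$ are isogenies with trivial kernel, so Lemma \ref{gradino1} applies in both directions and, moreover, no stabilizer-kernel intersection produces an extra multiplicity in the pushforward.

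For part (i), the plan is to expand the $i$-th coordinate of $T^{-1}x$ as $\sum_j [(T^{-1})_{ij}]x_j\in E$ and exploit the quadratic form structure. Denoting by $\langle\cdot,\cdot\rangle$ the bilinear pairing associated with $h$ on $E$, so that $h(y)=\langle y,y\rangle$, a Cauchy--Schwarz estimate yields $h(\sum_{j=1}^N y_j)\le N\sum_j h(y_j)$. Applied with $y_j=[(T^{-1})_{ij}]x_j$, combined with $h([a]y)=|a|^{\abv}h(y)$ and the entry bound $|(T^{-1})_{ij}|\le ||T^{-1}||$, this gives $h((T^{-1}x)_i)\le N ||T^{-1}||^{\abv}h_\elle(x)$. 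Summing over $i=1,\dots,N$ yields $h_\elle(T^{-1}x)\le (N||T^{-1}||)^{\abv}h_\elle(x)$, which is the right-hand inequality. The left-hand one is obtained by applying the same estimate to $T$ at the input $T^{-1}x$: $h_\elle(x)=h_\elle(T(T^{-1}x))\le (N||T||)^{\abv}h_\elle(T^{-1}x)$.

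For part (iii), I would apply Lemma \ref{gradino1} with $\psi=T^{-1}$ and $X=V$ of dimension $d$; since $T^{-1}$ is an isomorphism, $T^{-1}_*(V)=T^{-1}(V)$ and the lemma gives directly $\deg_\elle T^{-1}(V)\le N^d(3N||T^{-1}||)^{2d}\deg_\elle V$, which equals $(9N^3||T^{-1}||^2)^d\deg_\elle V$. For part (ii), apply the same lemma to $\psi=T$ and $X=T^{-1}(H)$, of dimension $n$; since $T_*(T^{-1}(H))=H$, the Masser--W\"ustholz bound reads $\deg_\elle H\le (9N^3||T||^2)^n\deg_\elle T^{-1}(H)$, and inverting delivers the desired lower bound.

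No step is a genuine obstacle; the only technical points are to verify the algebraic identity $N^d(3N||\psi||)^{2d}=(9N^3||\psi||^2)^d$ and to note that for an isomorphism the pushforward coincides with the image without any multiplicity factor, so the pushforward bound of Lemma \ref{gradino1} transfers to a bound on the image itself.
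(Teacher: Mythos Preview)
Your proposal is correct and follows essentially the same route as the paper: the paper derives (i) by the ``triangle inequality'' (which is precisely your Cauchy--Schwarz expansion of the quadratic form $h_\elle$) together with the substitution $x=T(T^{-1}x)$, and obtains (ii) and (iii) by applying Lemma~\ref{gradino1} to $\psi=T$, $X=T^{-1}(H)$ and to $\psi=T^{-1}$, $X=V$ respectively. Your remark that $T$ being an isomorphism makes $T^{-1}_*(V)=T^{-1}(V)$ is the only point the paper leaves implicit.
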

\begin{proof}
The second inequality of part i. is simply given by the triangle inequality. The triangle inequality also gives
$h_\elle(x)=h_\elle(TT^{-1}(x))\le(N ||T||)^{\abv}h_\elle(T^{-1}(x))$. This is the first inequality of i.

To prove part ii. apply lemma \ref{gradino1}   with $X=T^{-1}H$, then
\begin{equation*}
\begin{split}\deg_\elle H=\deg_\elle TT^{-1}(H)&\le N^n(3N ||T||)^{\abv n} \deg_\elle T^{-1}(H)\\&= N(9N^3 ||T||^2)^{ n} \deg_\elle T^{-1}(H).
\end{split}
\end{equation*} Note that $T^{-1}$ is an isomorphism so $T^{-1}H$ is irreducible.

Part iii. is an immediate application of Lemma \ref{gradino1}  with $X=V$.
\end{proof}

Thanks to the isomorphism $T$, we can give to the abelian subvariety $H$ a convenient position, in the sense that follows.
We construct a matrix $\phi$, which has the property that all minors of the last $n$ columns are non zero. Moreover its entries are close to the entries of $\varphi$.
\begin{D} Let $\varphi$ be as in lemma \ref{mw} and let $T$  be as in lemma \ref{tras}.
We define the abelian subvariety $$\mathbf{H}=T^{-1} H$$ and the isogeny $$\phi=\varphi T={\phi_H\choose\phi'_H},$$ 
where $\phi_H\in \mat_{N-n\times N}(\rend)$.

\end{D}
The kernel relation, immediately gives \begin{equation*}\label{cic}\mathbf{H}=T^{-1} H\subset \ker \phi_H.\end{equation*}
Note that, the isogeny $\phi:\gum^N \to \gum^N$ sends $H$ to the last $n$ factors, $$\phi(\mathbf{H})=0\times \dots \times 0 \times \gum^n.$$  Indeed $\phi(\mathbf{H})=\varphi(H)$, which has  by construction such a property, see lemma \ref{mw} ii.
We denote the immersion on the last $n$ factors by  $$i: \gum^n \to \gum^N, \,\,\,(x_1,\dots,x_n)\to (0,\dots,0,x_1,\dots,x_n).$$
 An immediate consequence of lemmas \ref{gradino} and \ref{mw}. is \begin{cor}
\label{push}
Let $\elle$ be a symmetric ample line bundle on $E^n$. Then, for $\phi$ as above,
$$\deg_\elle i^* \phi_*({\bf{H}})=|\det \phi|\deg_\elle E^n\le \bertrand \deg_\elle E^n.$$
\end{cor}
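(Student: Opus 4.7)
The key geometric input, recorded just before the statement, is that $\phi({\bf H}) = i(E^n)$, so the restriction $\phi\big|_{\bf H}\colon {\bf H}\to i(E^n)$ is an isogeny between abelian varieties of the same dimension $n$. My plan is to reduce the corollary to Lemma \ref{gradino} applied to the pair $(\phi, {\bf H})$, and then to read off the final numerical bound from Lemma \ref{mw}.

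First I would apply Lemma \ref{gradino} directly, with isogeny $\phi$ of $E^N$ and variety ${\bf H}$. Since ${\bf H}$ is an abelian subvariety, $\stab{\bf H}={\bf H}$, so the lemma yields
$$\deg_\elle\phi_*({\bf H}) = |{\bf H}\cap\ker\phi|\cdot\deg_\elle\phi({\bf H}) = |{\bf H}\cap\ker\phi|\cdot\deg_\elle i(E^n).$$
The cycle $\phi_*({\bf H})$ is supported on $i(E^n)$, with multiplicity equal to the degree $|{\bf H}\cap\ker\phi|$ of the isogeny $\phi\big|_{\bf H}$. Pulling back via the closed immersion $i$ therefore produces, as a well-defined cycle on $E^n$, exactly $|{\bf H}\cap\ker\phi|\cdot E^n$; taking degrees I obtain
$$\deg_\elle i^*\phi_*({\bf H}) = |{\bf H}\cap\ker\phi|\cdot\deg_\elle E^n.$$

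To conclude, I would estimate the kernel factor. Clearly $|{\bf H}\cap\ker\phi|\le|\ker\phi|$, and $|\ker\phi|$ is identified with $|\det\phi|$ via the convention already used in Section \ref{tre} (cf.\ the identity $\deg_{\phi^*\cbo}\ambiente=|\det\phi|\deg_\cbo\ambiente$ that appears in the proof of Theorem \ref{uno}). Since $\phi=\varphi T$ with $T$ an automorphism of $E^N$, $|\det\phi|=|\det\varphi|$, and Lemma \ref{mw}.iii bounds this by $\bertrand$. Chaining the inequalities yields the stated $\deg_\elle i^*\phi_*({\bf H})\le\bertrand\deg_\elle E^n$. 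No real obstacle arises: everything is a bookkeeping exercise on top of the two cited lemmas. The only point deserving a moment's care is the interpretation of $i^*$ applied to $\phi_*({\bf H})$, which is unambiguous here because the support of that cycle is entirely contained in $i(E^n)$, so no excess-intersection issue intervenes.
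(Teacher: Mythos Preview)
Your argument is essentially the paper's: apply Lemma~\ref{gradino} to $(\phi,{\bf H})$, note that $i$ preserves degrees of cycles supported on $i(E^n)$, and then bound the kernel factor via Lemma~\ref{mw}~iii.\ together with $|\det T|=1$. One remark: Lemma~\ref{gradino} actually yields the coefficient $|\mathrm{Stab}\,{\bf H}\cap\ker\phi|=|{\bf H}\cap\ker\phi|$, so what you establish is the chain $\deg_\elle i^*\phi_*({\bf H})=|{\bf H}\cap\ker\phi|\,\deg_\elle E^n\le|\det\phi|\,\deg_\elle E^n\le\kappa\,\deg_\elle E^n$, not the intermediate \emph{equality} with $|\det\phi|$ displayed in the corollary; the paper's own proof writes $|\det\phi|$ at that same spot without separately arguing that $\ker\phi\subset{\bf H}$, and only the final inequality is ever used (in Section~\ref{sei}), so your version suffices.
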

\begin{proof}
We first remark that $|\det \phi|\le \bertrand$. This simply follows by lemma \ref{mw} iii. and the fact that $T$ is an isomorphism. Now, 
apply lemma  \ref{gradino} to  $\phi$ and ${\bf{H}}$. Then $\deg_{i_*\elle}  \phi_*({\bf{H}})=|\det \phi|\deg_{i_
*\elle} \phi({\bf{H}})$.  Note that the map $i$ preserves the degree of subvarieties of $\{0\}^{N-n}\times E^n$.  In addition $\phi({\bf{H}})=E^n$. So $|\det \phi|\deg_{i
_*\elle} \phi({\bf{H}})=|\det \phi|\deg_\elle E^n$. \end{proof}

\begin{D}
\label{defa}
Let $\defalf$ be the minimal  positive integer  such that
there exists an isogeny $\hat\phi$ satisfing
$\phi\hat\phi=\hat\phi\phi=[\defalf]$.
We decompose $\invf=(A|B)$ with $A\in \mat_{N \times (N-n)}(\rend)$ and $B\in \mat_{N\times n}(\rend)$. We denote by $a_i$ the $i$-th row of $A$, similarly
 $$B=\left(\begin{array}{c}b_1\\
b_2\\
\vdots\\
b_N\end{array}\right).$$
\end{D}
 Note that, by definition of dual
isogeny $\defalf\le |\det \phi|$.
\begin{lem}
\label{isogenie}
For $I \in \mathbb{I}=\{(i_1,\dots, i_n)\,\,\,:\,\,\,i_j\in \{1,\dots , N\} \,\,\,{\rm{and}}\,\,\,i_j<i_{j+1}\}$ the morphism 
 $$\phi_I=\left(\begin{array}{c}b_{i_1}\\
\vdots\\
b_{i_n}\end{array}\right):\gum^n\to \gum^n$$ is an isogeny. 
\end{lem}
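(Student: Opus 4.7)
The plan is to observe that $\phi_I$, as an $n\times n$ matrix with entries in $\rend$, defines an isogeny of $\gum^n$ precisely when its determinant is non-zero in $\rend$, and then to verify this by tracing every $n\times n$ minor of $B$ back to the non-vanishing minors guaranteed by Lemma \ref{tras}.

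First, I would translate the defining relation $\phi\hat\phi=\hat\phi\phi=[\defalf]$ into a matrix identity: viewing $\phi,\hat\phi\in\mat_{N\times N}(\rend)$, this reads $\hat\phi=\defalf\,\phi^{-1}$, where the inverse is taken in $\mat_{N\times N}(\operatorname{Frac}\rend)$. In particular, $B$, which by definition is the block of the last $n$ columns of $\hat\phi$, is $\defalf$ times the block of the last $n$ columns of $\phi^{-1}=(\varphi T)^{-1}$.

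Next, I would invoke Lemma \ref{tras} part ii, which says that every $n\times n$ minor of the matrix consisting of the last $n$ columns of $(\varphi T)^{-1}$ is non-zero. Multiplying that block of columns by the scalar $\defalf$ multiplies each $n\times n$ minor by $\defalf^n\neq 0$, so every $n\times n$ minor of $B$ is also non-zero. For any $I=(i_1,\dots,i_n)\in\mathbb{I}$, the matrix $\phi_I$ is by definition the $n\times n$ submatrix of $B$ obtained by selecting the rows $b_{i_1},\dots,b_{i_n}$, so $\det\phi_I$ is precisely such a minor and is therefore non-zero.

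Finally, I would conclude by the standard fact that a matrix $M\in\mat_{n\times n}(\rend)$ induces an isogeny $\gum^n\to\gum^n$ if and only if $\det M\neq 0$ (for example, by looking at the induced $\co$-linear endomorphism of the tangent space at the origin, where non-degeneracy is equivalent to surjectivity with finite kernel). Applying this to $M=\phi_I$ yields the lemma. There is no real obstacle to overcome here: the content of the statement has already been prepared by the careful choice of $T$ in Lemma \ref{tras}, and this lemma is essentially a reformulation of that minor-non-vanishing property in the language of isogenies.
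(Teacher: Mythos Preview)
Your proof is correct and follows essentially the same approach as the paper: both deduce from $\hat\phi=\defalf\,\phi^{-1}$ that $B$ is a non-zero scalar multiple of the last $n$ columns of $(\varphi T)^{-1}$, invoke Lemma~\ref{tras}~ii to conclude all $n\times n$ minors of $B$ are non-zero, and then note that $\det\phi_I\neq 0$ is equivalent to $\phi_I$ being an isogeny. Your version is just a bit more explicit about each step.
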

\begin{proof}
 As $\phi \invf=\defalf \idi_N$,  lemma \ref{tras} ii. implies that all $n\times n$ minors  of $B$ are non-zero.  Then $\det \phi_I\not=0$. This is equivalent to say that $\phi_I$ is an isogeny. 
\end{proof}

\section{An equivalence  of line bundles}
\label{cinque}
 In this section, we work with the canonical line bundle $\mathcal{O}_N$ on the ambient variety $E^N$. For $H$ an abelian subvariety,  we study
$\mathcal{O}_{N| H}$. If $H$ has a sufficiently general position, we can express $\mathcal{O}_{N| H}$ as tensor
products and bull-backs via isogenies of the canonical bundle
$\mathcal{O}_n$ on $H$. 

Let $\elle$ and $\mathcal{M}$ be line bundles. We denote by $\chern(\elle)$ a representative of the first Chern-class of $\elle$. By $\piuc$ we mean the sum of cycles and by $\elle^m$ we mean the tensor product of $\elle$ $m$-times.
Recall that $\chern(\elle\otimes \mathcal{M})=\chern(\elle)\piuc \chern(\mathcal{M}).$ For $f$ a morphism
$
\chern(f^*\elle)=f^{-1}\chern(\elle).$

We recall that  $\cani$ is the line bundle on $E$ defined by the neutral element. 
 The standard line bundle $\cbo$ on $\gum^n$  is the tensor product of the pull-back of $\cani$ via the natural projections. 
Let $e_i:E^n \to E$ and $f_i:E^N \to E$ be the  projections on the $i$-th factor. Note that $e_i$ and $f_i$ are the vectors of a standard basis of $\ze^n $ and $\ze^N$.
By definition of standard line bundle, 
\begin{enumerate}
\item[ ]$\chern(\cbo)=\piuc_{i=1}^n  \ker e_i$,
\item[ ] $\chern(\cnbundle)=\piuc_{i=1}^N  \ker f_i$.
\end{enumerate}
 Note that, for any integer $\defalf$,  $\ker (\defalf f_i) =\ker (f_i[\defalf])=[\defalf]^{-1}\ker f_i=[\defalf]^*\ker f_i$. In addition, by \cite{l-b} page 34 corollary 3.6, for $\elle$ a symmetric ample line bundle on $E^n$,  
\begin{equation*}
\chern([\defalf]^*\elle)=\chern(\elle^{\defalf^2}).
\end{equation*}
Then, 
\begin{enumerate}
\item[ ] $\chern(\mathcal{O}_N^{\defalf^2})=\piuc_{i=1}^N  \ker \defalf f_i$.
\end{enumerate}

Let us first state a basic relation which proves useful. 

\begin{lem}
\label{kercan}
In the above notation, it holds
$$\chern({\mathcal{O}_N^{\defalf^2}}_{|\mathbf{H}})=\piuc_{i=1}^N\ker{\phi_H \choose\defalf f_i}.$$
\end{lem}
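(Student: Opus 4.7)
The plan is to unfold the given recipe for $\chern(\cnbundle)$ on $E^N$, multiply by $\defalf^2$ using the scaling formula, and then restrict to $\mathbf{H}$ observing that the condition $\phi_H x = 0$ is automatically satisfied on $\mathbf{H}$.

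First I would start from the identity stated just before the lemma, $\chern(\cnbundle) = \piuc_{i=1}^N \ker f_i$, applied with $\elle = \cnbundle$ to the scaling identity $\chern([\defalf]^*\elle) = \chern(\elle^{\defalf^2})$ (i.e.\ \cite{l-b} page 34 corollary 3.6). Combined with the pull-back formula $\chern(f^*\elle) = f^{-1}\chern(\elle)$ applied to the morphism $[\defalf]:E^N\to E^N$, this distributes termwise to give
$$\chern(\cnbundle^{\defalf^2}) \;=\; \piuc_{i=1}^N \ker(\defalf f_i)$$
as a divisor class on $E^N$, exactly as already observed in the discussion preceding the lemma.

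Next, let $j:\mathbf{H}\hookrightarrow E^N$ denote the inclusion. Naturality of Chern classes under restriction gives
$$\chern(\cnbundle^{\defalf^2}|_{\mathbf{H}}) \;=\; j^{-1}\chern(\cnbundle^{\defalf^2}) \;=\; \piuc_{i=1}^N \bigl(\ker(\defalf f_i) \cap \mathbf{H}\bigr),$$
as divisor classes on $\mathbf{H}$.

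Finally I would use the key containment $\mathbf{H} = T^{-1}H \subset \ker\phi_H$, which was recorded right after the definition of $\phi = \varphi T$ in Section \ref{quattro}. Because $\phi_H$ vanishes identically on $\mathbf{H}$, intersecting with $\ker\phi_H$ does nothing further to divisors already inside $\mathbf{H}$, so
$$\ker(\defalf f_i) \cap \mathbf{H} \;=\; \ker\phi_H \cap \ker(\defalf f_i) \cap \mathbf{H} \;=\; \ker\binom{\phi_H}{\defalf f_i} \cap \mathbf{H},$$
the middle equality being the tautological one $\ker\phi_H \cap \ker(\defalf f_i) = \ker\binom{\phi_H}{\defalf f_i}$. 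Substituting term by term yields $\chern(\cnbundle^{\defalf^2}|_{\mathbf{H}}) = \piuc_{i=1}^N \ker\binom{\phi_H}{\defalf f_i}$ in $\mathrm{NS}(\mathbf{H})$, which is the claim.

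The only real subtlety, rather than an obstacle, is bookkeeping: one must keep in mind that $\ker\phi_H$ is not exactly $\mathbf{H}$ but a finite union of translates of $\mathbf{H}$ (cf.\ Lemma \ref{mw}), so the expressions $\ker\binom{\phi_H}{\defalf f_i}$ should be read as divisors on $\mathbf{H}$ via the above intersection; once this convention is made explicit, the proof is a direct chain of identities with no further ingredients needed beyond the formulas for Chern classes already collected in the preamble of this section.
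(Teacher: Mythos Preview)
Your chain of identities is formally correct but skips the one point that carries actual content. When you write
$$j^{-1}\chern(\cnbundle^{\defalf^2}) \;=\; \piuc_{i=1}^N \bigl(\ker(\defalf f_i)\cap\mathbf{H}\bigr)$$
as divisor classes on $\mathbf{H}$, you are implicitly assuming that each $\ker(\defalf f_i)$ meets $\mathbf{H}$ in a genuine divisor, i.e.\ that $\mathbf{H}\not\subset\ker(\defalf f_i)$. If the $i$-th coordinate projection $f_i$ vanished identically on $\mathbf{H}$, the intersection would be all of $\mathbf{H}$ and the right-hand side would not make sense as a divisor. For an arbitrary abelian subvariety of $E^N$ this can certainly happen; the whole purpose of replacing $H$ by $\mathbf{H}=T^{-1}H$ via Lemma~\ref{tras} is precisely to rule it out.

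The paper's proof supplies exactly this missing verification. From the matrix identity
$$\begin{pmatrix}\idi_{N-n}&0\\ a_i&b_i\end{pmatrix}\phi \;=\; \binom{\phi_H}{\defalf f_i}$$
together with Lemma~\ref{isogenie} (every $b_i\neq 0$, a consequence of the non-vanishing minors guaranteed by Lemma~\ref{tras}~ii), the left matrix has rank $N-n+1$; since $\phi$ is an isogeny, so does $\binom{\phi_H}{\defalf f_i}$, which forces $\ker(\defalf f_i)\cap\mathbf{H}$ to have codimension one in $\mathbf{H}$. Once this generic-intersection check is in place, your remaining manipulations coincide with the paper's. The ``bookkeeping'' issue you flag about $\ker\phi_H$ versus $\mathbf{H}$ is comparatively harmless; the genericity of the intersection is the substantive point you still need to add.
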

\begin{proof}
Note that,
\begin{equation*}
 \begin{pmatrix} \idi_{N-n}&0\\ a_i & b_i\end{pmatrix} \phi={\phi_H \choose \defalf f_i},\end{equation*} 
 where $\defalf$ is given in definition \ref{defa}.
 From lemma \ref{isogenie}, $b_i\not=0$ for all $1\le i\le N$. Thus the rank of $ \begin{pmatrix} \idi_{N-n}&0\\ a_i & b_i\end{pmatrix} $ is $N-n+1$. Since $\phi$ is invertible also ${\phi_H \choose  \defalf f_i}$ has rank $N-n+1$. This means that $\ker \defalf f_i$  intersects generically $\mathbf{H}$. Then, the restriction bundle ${\mathcal{O}_{N}^{\defalf^2}}_{|\mathbf{H}}$ satisfies   $\chern({\mathcal{O}_{N}^{\defalf^2}}_{|\mathbf{H}})= \mathbf{H} \cap \chern({\mathcal{O}_{N}^{\defalf^2}})=\mathbf{H} \cap \left(\piuc\ker {\defalf} f_i\right)=\piuc_{i=1}^N\ker{\phi_H \choose {\defalf}f_i}.$

\end{proof}

We sum up the situation with the following diagram:
\begin{equation*}
\begin{array}{ccccccc}
E^N &\stackrel{\phi}{\longrightarrow}  & E^{N-n}\times E^n & \stackrel{i}\hookleftarrow & E^n & \stackrel{\phi_I}{\longrightarrow}   E^n\\
\mathbf{H}& \longrightarrow & 0\times E^n & \longleftarrow & E^n & &
\end{array}
\end{equation*}

\begin{thm}
\label{relchiave}
The following equivalence of line bundles holds
$$ {\mathcal{O}_{{N}_{\,\,\,{\big{|\mathbf{H}}}}}^{ \defalf^2 \moltp }
}\cong \phi^*i_* \bigotimes_I \phi^*_I \cbo,$$
where $\phi\hat\phi=\hat\phi \phi=[\defalf]$ is as in definition \ref{defa} and $\phi_I$ is defined in
lemma \ref{isogenie}. 
\end{thm}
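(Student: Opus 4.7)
The plan is to prove the stated isomorphism by computing and comparing first Chern classes on $\mathbf{H}$: both sides are line bundles on $\mathbf{H}$, so it suffices to match their Chern divisors termwise.

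First I would apply Lemma \ref{kercan} to the left-hand side, which gives
$$\chern({\mathcal{O}_N^{\defalf^2}}_{|\mathbf{H}})=\piuc_{i=1}^N\ker{\phi_H\choose\defalf f_i}.$$
Since $\mathbf{H}\subset\ker\phi_H$, each divisor $\ker{\phi_H\choose\defalf f_i}$ restricted to $\mathbf{H}$ coincides with $\ker(\defalf f_i|_\mathbf{H})$, and passing to the $\moltp$-fold tensor power multiplies the Chern class by $\moltp$. Hence on $\mathbf{H}$,
$$\chern({\mathcal{O}_N^{\defalf^2\moltp}}_{|\mathbf{H}})=\moltp\piuc_{i=1}^N\ker(\defalf f_i|_\mathbf{H}).$$

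Next I would compute the Chern class of the right-hand side. Starting on $E^n$, since $\chern(\cbo)=\piuc_{j=1}^n\ker e_j$ and the $j$-th row of $\phi_I$ is $b_{i_j}$, one has $\chern(\phi_I^*\cbo)=\piuc_{j=1}^n\ker b_{i_j}$. Summing over $I\in\mathbb{I}$, each index $i\in\{1,\dots,N\}$ belongs to exactly $\moltp$ subsets of size $n$, so
$$\chern\Big(\bigotimes_I\phi_I^*\cbo\Big)=\moltp\piuc_{i=1}^N\ker b_i.$$
The immersion $i$ identifies $E^n$ with $\{0\}^{N-n}\times E^n\subset E^N$, and by construction $\phi|_\mathbf{H}$ lands in this subvariety, acting as $\phi'_H|_\mathbf{H}$ under the identification. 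Hence $\phi^*i_*$, restricted to $\mathbf{H}$, coincides with pullback along the isogeny $\phi'_H|_\mathbf{H}:\mathbf{H}\to E^n$, and the right-hand side has Chern class $\moltp\piuc_{i=1}^N\ker(b_i\phi'_H|_\mathbf{H})$ on $\mathbf{H}$.

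The key identity then matches the two expressions term by term. Writing $\invf=(A|B)$ and $\phi={\phi_H\choose\phi'_H}$, the relation $\invf\phi=[\defalf]\idi_N$ expands to $A\phi_H+B\phi'_H=\defalf\idi_N$. Restricting to $\mathbf{H}\subset\ker\phi_H$ kills the first summand, so $B\phi'_H|_\mathbf{H}=\defalf\idi_N|_\mathbf{H}$. Reading this row by row yields $b_i\phi'_H|_\mathbf{H}=\defalf f_i|_\mathbf{H}$ for every $i=1,\dots,N$, hence $\ker(b_i\phi'_H|_\mathbf{H})=\ker(\defalf f_i|_\mathbf{H})$ as divisors on $\mathbf{H}$, and the two Chern classes agree.

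The hardest part will be giving a clean meaning to the composition $\phi^*i_*$: the cleanest interpretation is pullback along the isogeny $\phi'_H|_\mathbf{H}:\mathbf{H}\to E^n$ that $\phi$ induces on the abelian subvariety $\mathbf{H}$, and one should check it is compatible with the cycle-theoretic computation above. Once this identification is settled, the remainder is the explicit Chern-class calculation, whose engine is the dual-isogeny identity $\invf\phi=[\defalf]\idi_N$ applied row by row.
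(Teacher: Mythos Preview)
Your proposal is correct and follows the same strategy as the paper: compute first Chern classes on both sides, expand $\chern(\bigotimes_I\phi_I^*\cbo)$ by the counting argument to get $\moltp\piuc_{i=1}^N\ker b_i$, and then match terms using the dual-isogeny identity $\invf\phi=[\defalf]$ together with Lemma~\ref{kercan}.

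The only organizational difference is in how $\phi^*i_*$ is handled. You restrict to $\mathbf{H}$ immediately and realize $\phi^*i_*$ as pullback along the isogeny $\phi'_H|_{\mathbf{H}}:\mathbf{H}\to E^n$, then use $B\phi'_H|_{\mathbf{H}}=\defalf\,\idi_N|_{\mathbf{H}}$ row by row. The paper instead stays on $E^N$ and works cycle-theoretically: it writes $\chern(i_*\bigotimes_I\phi_I^*\cbo)$ as $\moltp\piuc_i\ker\begin{pmatrix}\idi_{N-n}&0\\0&b_i\end{pmatrix}$, invokes Lemma~\ref{relker}~ii.\ (Gauss reduction) to replace the zero block by $a_i$, and then applies $\phi^{-1}$ via Lemma~\ref{relker}~i.\ to obtain $\moltp\piuc_i\ker{\phi_H\choose\defalf f_i}$, which is exactly the expression in Lemma~\ref{kercan}. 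Your route avoids the ambient cycle manipulation at the cost of having to justify the identification $\phi^*i_*\cong(\phi'_H|_{\mathbf{H}})^*$; the paper's route makes that identification automatic but requires the two kernel identities of Lemma~\ref{relker}. Either way, the engine is the same matrix identity coming from $\invf\phi=[\defalf]$.
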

\begin{proof}
Two line bundles are equivalent if and only if they have the same Chern-class. Recall that $$\chern(\cbo)=\piuc_{i=1}^n \ker e_i.$$
For any  isogeny $\psi:\gum^n \to \gum^n$, $$\chern(\psi^*\cbo)=\psi^{-1} \chern(\cbo)=\piuc_{i=1}^n \ker (e_i \psi)=\piuc_{i=1}^n \ker (\psi_i),$$ where 
$\psi_i: \gum^n \to \gum$ is  the $i$-th row of $\psi$.

 Apply this formula to each $\phi_I$. Then,  for $I=(i_1,\dots ,i_n)$,
 \begin{equation}\label{cherni}\chern(\phi_I^*\cbo )= \ker b_{i_1} \piuc\dots \piuc\ker b_{i_n}.\end{equation} 
 Since the Chern class of the tensor product is the sum of the Chern classes, we obtain
   \begin{equation}\label{chernl}\chern\left(\bigotimes_I \phi^*_I\cbo \right)= \piuc_{I\in \mathbb{I}} (\ker b_{i_1} \piuc\dots \piuc\ker b_{i_n}) =\frac{n}{N}{N \choose n} \piuc_{i=1}^N \ker b_i,\end{equation}
  where the last equality  is justified from the fact that the cardinality of $\mathbb{I}$ is ${N \choose n}$, in addition each multi-index $I$ consists of  $n$ coordinates and  each of the $N$ indices  appears with the same recurrence, so each row $b_{i_j}$ appears $\frac{n}{N}{N \choose n} $-times.
  Note that, for a bundle on $E^n$, $\chern(i_*\elle)=\{0\}^{N-n}\times \chern(\elle)$. 
In view of Lemma \ref{relker} ii., we deduce  
\begin{equation*}
\begin{split}\chern(i_*\bigotimes_I \phi^*_I\cbo)&= \moltp \piuc _{i=1}^N \ker \begin{pmatrix} \idi_{N-n}&0 \\ 0 & b_i\end{pmatrix}\\&= \moltp \piuc _{i=1}^N \ker \begin{pmatrix}\idi_{N-n}&0 \\ a_i&b_i\end{pmatrix}.
\end{split}
\end{equation*}
Recall that $\hat\phi\phi=[\defalf]$. Using  lemma \ref{relker} i. and  lemma \ref{kercan}, we conclude
\begin{equation*}
\begin{split}\chern(\phi^*i_* \bigotimes_I \phi^*_I\cbo)&=\phi^{-1}\chern(i_* \bigotimes_I \phi^*_I\cbo)\\ &
= \moltp \piuc _{i=1}^N\ker \begin{pmatrix} \idi_{N-n}&0 \\ a_i & b_i\end{pmatrix}  \phi\\&=
 \moltp \piuc _{i=1}^N\ker {\phi_H \choose \defalf f_i}\\&
=\moltp \chern\left({\mathcal{O}_N^{ \defalf^2}}_{|H}\right)=\chern\left({\mathcal{O}_N^{ \defalf^2\moltp}}_{|H}\right)
\end{split}
\end{equation*}

 \end{proof}

I am gratefull to Ga\"el R\'emond for suggesting me the following proof.
\begin{propo}
\label{gael} The following equivalence of degrees holds
$$\deg  \bigotimes_{I \in \mathbb{I}} \phi^*_I \cbo = \moltp^n \sum_{I \in \mathbb{I}} \deg  \phi_I^*\cbo.$$
\end{propo}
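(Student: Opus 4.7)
The plan is to interpret both sides as top self-intersection numbers on $E^n$ and to compute them using the vanishing of self-intersections of pull-backs of $\cani$ from the elliptic curve $E$. For any line bundle $\elle$ on $E^n$, the degree is proportional to $\chern(\elle)^n$ with a proportionality constant depending only on $E^n$ and the fixed normalization of degrees (not on $\elle$); this is the content of the elementary property (\ref{gradino1ii}) applied to different $\elle$. Hence the statement reduces to the Chern-class identity
\[
\chern\Bigl(\bigotimes_{I\in\mathbb{I}}\phi_I^*\cbo\Bigr)^n \;=\; \moltp^n \sum_{I\in\mathbb{I}} \chern(\phi_I^*\cbo)^n
\]
on $E^n$.

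The key observation is a self-intersection vanishing. For each $i\in\{1,\dots,N\}$, lemma \ref{isogenie} ensures that $b_i:E^n\to E$ is a surjective homomorphism, and the divisor $D_i:=\ker b_i$ representing $\chern(b_i^*\cani)$ can be moved to a disjoint algebraically equivalent translate, namely any other fibre of $b_i$. In particular $D_i\cdot D_i=0$ in the numerical group of $E^n$, while for pairwise distinct indices the intersection numbers $D_{i_1}\cdots D_{i_n}$ are nonzero and compute (up to the common normalization constant) the degree of the isogeny $\phi_I=(b_{i_1},\dots,b_{i_n})$.

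With this in hand, equation (\ref{cherni}) reads $\chern(\phi_I^*\cbo)=D_{i_1}+\cdots+D_{i_n}$, and the multinomial expansion, using $D_{i_j}^2=0$, collapses to the single squarefree term
\[
\chern(\phi_I^*\cbo)^n \;=\; n!\,D_{i_1}\cdots D_{i_n}.
\]
Similarly, equation (\ref{chernl}) gives $\chern(\bigotimes_I\phi_I^*\cbo)=\moltp\sum_{i=1}^N D_i$, and the same multinomial expansion with $D_i^2=0$ yields
\[
\chern\Bigl(\bigotimes_I\phi_I^*\cbo\Bigr)^n \;=\; \moltp^n\Bigl(\sum_{i=1}^N D_i\Bigr)^n \;=\; \moltp^n\,n!\sum_{I\in\mathbb{I}} D_{i_1}\cdots D_{i_n}.
\]
Comparing the two displays gives the Chern-class identity, and the proposition follows after clearing the common normalization constant.

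The only subtle point is the vanishing $D_i^2=0$, which is essentially the statement that a divisor pulled back from a curve via a surjective homomorphism of abelian varieties can be translated to a parallel disjoint divisor; once this is in place, the rest is pure multinomial bookkeeping, and I expect the whole argument to be very short.
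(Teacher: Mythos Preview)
Your argument is correct and is essentially the same as the paper's: both compute the degrees as top self-intersections of the Chern classes, use equations (\ref{cherni}) and (\ref{chernl}), and expand multinomially, the squarefree terms alone surviving because $(\ker b_i)^2=0$. The only difference is that you spell out the vanishing $D_i^2=0$ explicitly (via the fibration over $E$), whereas the paper leaves this step implicit in its one-line intersection-number computation; your reference to relation (\ref{gradino1ii}) for the ``proportionality constant'' is unnecessary, since the identification of $\deg_\elle$ with $\chern(\elle)^n$ is just the standard intersection-theoretic definition.
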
 
\begin{proof}

 We  compute the degrees as intersection numbers. By relation
 (\ref{cherni}), we have
$$\deg \phi_I^* \cbo=n! \prod_{i_j\in I}\ker b_{i_j},$$
where the product has the sens of intersection number.
Similarly, by formula (\ref{chernl}), we obtain
$$ \deg  \otimes_I \phi_I^* \cbo=n! \moltp^n
\sum_{i_1<\dots <i_n}\prod_{j=1}^n \ker b_{i_j}=
\moltp^n\sum_I\deg \phi_I^*\cbo.$$

\end{proof}

\section{The proof of theorem \ref{due}: The conclusion}
\label{sei}
We first prove a weak form of theorem \ref{due}. We then remove the restrictive hypothesis.
\begin{thm}
\label{canoni} Theorem \ref{due} holds for $H$ an abelian subvariety and an explicit positive constant
$c'(E,n,N,\eta)$.
\end{thm}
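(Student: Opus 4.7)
The plan is to reduce, via the isomorphism $T$ of Lemma \ref{tras} and the bundle equivalence of Theorem \ref{relchiave}, to a situation on $E^n$ where the isogeny functoriality of Theorem \ref{uno} applies, and then to combine the resulting estimates into a single bound.

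Set $\mathbf{V}=T^{-1}V$ and $\mathbf{H}=T^{-1}H$. By Lemma \ref{stima} the heights and degrees on either side of $T$ differ by factors bounded only in terms of $n$ and $N$, so it suffices to bound $\mu_\cnbundle(\mathbf{V})$ from below. Theorem \ref{relchiave} gives
\[
\mathcal{O}_N^{\defalf^2\moltp}\big|_\mathbf{H}\;\cong\;\phi^*i_*\!\bigotimes_{I\in\mathbb{I}}\phi_I^*\cbo.
\]
Combined with $\mu_{\elle^m}=m\mu_\elle$ from (\ref{gradino1iii}) and $\mu_{\phi^*\elle}(X)=\mu_\elle(\phi X)$, this yields
\[
\defalf^2\moltp\cdot\mu_\cnbundle(\mathbf{V})\;=\;\mu_{\bigotimes_I\phi_I^*\cbo}(W),\qquad W:=i^{-1}\phi(\mathbf{V})\subset E^n.
\]
Since $V$ is transverse in $H$ and $\phi|_\mathbf{H}\colon\mathbf{H}\to E^n$ is an isogeny (being finite and surjective between abelian varieties of the same dimension), $W$ is a $d$-dimensional transverse subvariety of $E^n$. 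I would then split the tensor product via Lemma \ref{minessmin} and apply Theorem \ref{uno} to each summand, noting that the $\phi_I$ are isogenies by Lemma \ref{isogenie}:
\[
\mu_{\bigotimes_I\phi_I^*\cbo}(W)\;\ge\;\sum_{I\in\mathbb{I}}\mu_{\phi_I^*\cbo}(W)\;\ge\;c_1\sum_{I\in\mathbb{I}}\frac{\bigl(\deg_{\phi_I^*\cbo}E^n\bigr)^{\invh-\eta}}{\bigl(\deg_{\phi_I^*\cbo}W\bigr)^{\invh+\eta}}.
\]

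The main obstacle is the combinatorial aggregation of the $\binom{N}{n}$ individual estimates into a single inequality of the advertised form. Two ingredients do the work. First, nonnegativity of intersection numbers of ample classes against effective cycles gives $\deg_{\phi_I^*\cbo}W\le\deg_{\bigotimes_J\phi_J^*\cbo}W$ for every $I$ (the term $\chern(\phi_I^*\cbo)^d\cdot W$ is one of the nonnegative summands in the multinomial expansion of $(\sum_J\chern(\phi_J^*\cbo))^d\cdot W$), which uniformizes the denominators. Second, a power-mean inequality applied to $x\mapsto x^{\invh-\eta}$ bounds $\sum_I(\deg_{\phi_I^*\cbo}E^n)^{\invh-\eta}$ from below by a constant depending only on $|\mathbb{I}|$ times $(\sum_I\deg_{\phi_I^*\cbo}E^n)^{\invh-\eta}$, and Proposition \ref{gael} identifies this inner sum as $\moltp^{-n}\deg_{\bigotimes_I\phi_I^*\cbo}E^n$. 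Finally, Theorem \ref{relchiave} together with Lemma \ref{gradino} and (\ref{gradino1ii}) translates both tensor-product degrees on $E^n$ back into $\cnbundle$-degrees of $\mathbf{H}$ and $\mathbf{V}$. The $\defalf$-exponents collected along the way combine exactly to $\defalf^{-2(n+d)\eta}$, which is bounded by $\bertrand^{-2(n+d)\eta}$ since $\defalf\le\bertrand$; the remaining numerical factors ($\moltp$, $|\mathbb{I}|$, $|\stab\mathbf{V}\cap\ker\phi|$, $|\mathbf{H}\cap\ker\phi|$, and the powers of $\|T\|^{\pm 1}$ from the initial reduction) depend only on $n,N,\eta$ and are absorbed into the final constant $c'(E,n,N,\eta)$.
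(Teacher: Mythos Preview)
Your proposal is correct and follows essentially the same route as the paper: reduce to $\mathbf{H},\mathbf{V}$ via $T$, use the bundle identity of Theorem \ref{relchiave} to pass to $E^n$, split the tensor product with Lemma \ref{minessmin}, apply Theorem \ref{uno} termwise, uniformize the denominators by ampleness, aggregate the numerators via subadditivity of $x\mapsto x^{\invh-\eta}$ combined with Proposition \ref{gael}, and translate back. The only cosmetic differences are that the paper works throughout with the push-forward $i^{*}\phi_*(\mathbf{V})$ rather than the set-theoretic image $W$ (absorbing the multiplicity via Corollary \ref{push} instead of tracking $|\stab\mathbf{V}\cap\ker\phi|$ explicitly), and that it performs the $T$-transfer at the end rather than at the start; also, the inequality you invoke is really subadditivity of the concave power (constant $1$), not a power-mean inequality.
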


\begin{proof}

First we prove the theorem for 
$${\mathbf{H}}=T^{-1}H=\ker \phi_H$$ and
$${\mathbf{V}}=T^{-1}V,$$
where $T$ is as in lemma \ref{tras}.
The isomorphism $T$ preserves transversality and dimensions. So  $\dim{\mathbf{H}}=n $, $\dim{\mathbf{V}}=d $ and $N$ is the dimension of the ambient variety.
Since  ${\mathbf{V}}$ is transverse in ${\mathbf{H}}$, $i^{*}\phi_*({\mathbf{V}})$ is transverse in $\gum^n$.

By lemma \ref{minessmin}, we know that 
$$\mu_{\bigotimes_I\phi^*_I \cbo}(i^{*}\phi_*({\mathbf{V}})) \ge \sum_I
\mu_{\phi^*_I \cbo}(i^{*}\phi_*({\mathbf{V}})).$$

We apply Theorem \ref{uno}  to each $\Phi_I$ on $E^n$ and $\cbo$. For simplicity we denote $\cunosp=\cuno$. We deduce that 
for each $I$, 
\begin{equation*}
\mu_{\phi^*_I \cbo}(i^{*}\phi_*({\mathbf{V}})) \ge   
\cunosp \frac{\left(\deg_ { \phi^*_I \cbo}E^n\right)^{\frac{1}{n-d}-\eta}
}{\left(\deg_ {\phi^*_I
      \cbo}i^{*}\phi_*({\mathbf{V}})\right)^{\frac{1}{n-d}+\eta}}.\end{equation*}
We obtain
\begin{equation*}
\begin{split}
\mu_{\bigotimes_I\phi^*_I \cbo}(i^{*}\phi_*({\mathbf{V}})) &=\sum_I
\mu_{\phi^*_I \cbo}(i^{*}\phi_*({\mathbf{V}}))\\
&\ge   
 \cunosp\sum_I \frac{\left(\deg_ { \phi^*_I \cbo}E^n\right)^{\frac{1}{n-d}-\eta}
}{\left(\deg_ {\phi^*_I \cbo}i^{*}\phi_*({\mathbf{V}})\right)^{\frac{1}{n-d}+\eta}}
\end{split}
\end{equation*}
Since each bundle is ample, for every variety $X$, we have   $\deg_ {\phi^*_I
      \cbo  }X \le \deg_ {\otimes_I\phi^*_I
      \cbo}X $. Recall that, for $x_i\ge 1$, $(\sum x_i)^{1/m}\le \sum
    x_i^{1/m}$. Using then Proposition \ref{gael}, we deduce
\begin{equation*}
\begin{split}
\sum_I \frac{\left(\deg_ { \phi^*_I \cbo}E^n\right)^{\frac{1}{n-d}-\eta}
}{\left(\deg_ {\phi^*_I \cbo}i^{*}\phi_*({\mathbf{V}})\right)^{\frac{1}{n-d}+\eta}}&\ge  \frac{\left(\sum_I \deg_ { \phi^*_I \cbo}E^n\right)^{\frac{1}{n-d}-\eta}
}{\left( \deg_ {\otimes_I\phi^*_I
      \cbo}i^{*}\phi_*({\mathbf{V}})\right)^{\frac{1}{n-d}+\eta}}\\&\ge \moltp^{\frac{-n}{n-d}+n\eta} \frac{\left(\deg_ { \bigotimes_I\phi^*_I \cbo}E^n\right)^{\frac{1}{n-d}-\eta}
}{\left(\deg_ {\bigotimes_I\phi^*_I \cbo}i^{*}\phi_*({\mathbf{V}})\right)^{\frac{1}{n-d}+\eta}}.
\end{split}
\end{equation*}
Therefore
\begin{equation*}\mu_{\bigotimes_I\phi^*_I \cbo}(i^{*}\phi_*({\mathbf{V}})) \ge 
\cunosp\moltp^{\frac{-n}{n-d}+n\eta} \frac{\left(\deg_ { \bigotimes_I\phi^*_I \cbo}E^n\right)^{\frac{1}{n-d}-\eta}
}{\left(\deg_ {\bigotimes_I\phi^*_I \cbo}i^{*}\phi_*({\mathbf{V}})\right)^{\frac{1}{n-d}+\eta}}
.
\end{equation*}

Note that $E^n=i^{*}\phi({\mathbf{H}})$. Moreover, by corollary \ref{push}, $\deg E^n \le \bertrand^{-1} \deg i^{*}\phi_*({\mathbf{H}})$. Define $$\cunospdue=\cunosp \bertrand^{-1}.$$  We deduce  \begin{equation*}\mu_{\bigotimes_I\phi^*_I \cbo}(i^{*}\phi_*({\mathbf{V}})) \ge 
\cunospdue\moltp^{\frac{-n}{n-d}+n\eta} \frac{\left(\deg_ { \bigotimes_I\phi^*_I \cbo}i^{*}\phi_*({\mathbf{H}})\right)^{\frac{1}{n-d}-\eta}
}{\left(\deg_ {\bigotimes_I\phi^*_I \cbo}i^{*}\phi_*({\mathbf{V}})\right)^{\frac{1}{n-d}+\eta}}
.
\end{equation*}

By   theorem \ref{relchiave} and relations (\ref{gradino1ii}) and
(\ref{gradino1iii}) we obtain
\begin{equation*}
\begin{split}
 \deg_ {\bigotimes_I \phi^*_I \cbo}i^{*}\phi_*({\mathbf{H}}) &=\deg_{\cnbundle_{| {\mathbf{H}}}^{ \defalf^2 \moltp } }{\mathbf{H}}=\left[ \defalf^2\moltp \right]^n\deg_ {\cnbundle}{\mathbf{H}},\\
\deg_ {\bigotimes_I \phi^*_I \cbo}i^{*}\phi_*({\mathbf{V}})&=\deg_{\cnbundle_{| {\mathbf{H}}}^{ \defalf^2 \moltp } }{\mathbf{V}}=\left[\defalf^2\moltp\right]^d\deg_ {\cnbundle}{\mathbf{V}},\\
\mu_{ \bigotimes_I \phi^*_I \cbo}(i^{*}\phi_*({\mathbf{V}}))&=\mu_{\cnbundle_{| {\mathbf{H}}}^{ \defalf^2 \moltp } }({\mathbf{V}})=\left[\defalf^2\moltp\right]\mu_\cnbundle ({\mathbf{V}}).
\end{split}
\end{equation*}
Then
\begin{equation*}
\defalf^2\moltp\mu_\cnbundle ({\mathbf{V}})\ge \cunospdue \left[\defalf^2\moltp\right]^{1-(n+d)\eta} \moltp^{\frac{-n}{n-d}+n\eta} 
\frac{
\left(\deg_ {\cnbundle}{\mathbf{H}}\right)^{\invh-\eta}}{\left(\deg_ {\cnbundle}{\mathbf{V}}\right)^{\invh+\eta}}.
\end{equation*}
In conclusion
$$\mu_\cnbundle ({\mathbf{V}})\ge \cunospdue \defalf^{-2(n+d)\eta} \moltp^{\frac{-n}{n-d}-d\eta}
\frac{(\deg_ {\cnbundle}{\mathbf{H}})^{\invh-\eta}}{(\deg_ {\cnbundle}{\mathbf{V}})^{\invh+\eta}}.$$
By Lemma \ref{mw},  $\defalf \le |\det \phi|\le \bertrand$.  Then, for $c_3=c_2 \bertrand^{2(n-d)\eta} \moltp^{\frac{-n}{n-d}-d\eta}$,
$$\mu_\cnbundle ({\mathbf{V}})\ge c_3
\frac{(\deg_ {\cnbundle}{\mathbf{H}})^{\invh-\eta}}{(\deg_ {\cnbundle}{\mathbf{V}})^{\invh+\eta}}.$$

So we just proved  the theorem for $\mathbf{V}$ and $\mathbf{H}$. We are now going to show it for $V$ and $H$. This is a consequence of the fact that the isomorphism $T$ does not change much degrees and heights (see lemma \ref{stima}).

By lemma \ref{stima} i. we deduce that $\mu_\cnbundle(V)\ge \frac{\mu_\cnbundle({\mathbf{V}})}{({N||T||})^{\abv} }$, indeed isomorphisms preserve non-dense subsets of varieties. Then
$$\mu_\cnbundle (V)\ge c_3 (N||T||)^{-\abv} \frac{(\deg_ {\cnbundle}{\mathbf{H}})^{\invh-4(n+d+1)\eta}}{(\deg_ {\cnbundle}{\mathbf{V}})^{\invh+\eta}}.$$
By Lemma \ref{stima} ii. and iii. we  obtain
\begin{equation*}
\begin{split}\mu_\cnbundle (V)\ge  & c_3 (N||T||)^{-2}\\&{(9N^3||T||^2)^{-\frac{n}{n-d}+n\eta}(9N^3||T^{-1}||^2)^{-\frac{d}{n-d}-d\eta}} \frac{(\deg_ {\cnbundle}H)^{\invh-\eta}}{(\deg_ {\cnbundle}V)^{\invh+\eta}}.\end{split}
\end{equation*}
 In view of lemma \ref{tras} i., $||T||,||T^{-1}||\le \frac{1}{N} {N\choose n}$. Thus
$$\mu_\cnbundle (V)\ge  c_4  \frac{(\deg_ {\cnbundle}H)^{\invh-\eta}}{(\deg_ {\cnbundle}V)^{\invh+\eta}},$$
where $$c_4=c_3 (9N)^{\frac{-n-d}{n-d}+(n-d)\eta}{N\choose n}^{\frac{-4n}{n-d}-2(n-d)\eta}.$$
This directly gives the wished result.
\end{proof}

We now relax the hypothesis on $H$. We prove theorem \ref{due} for $V$ transverse in a translate of an abelian subvariety.
\begin{propo}
\label{fine}
Theorem \ref{due} holds for $H$ a translate of an abelian subvariety  and  a positive constant
$c''= \frac{1}{2}c',$ where $c'$ is as in theorem \ref{canoni}.
\end{propo}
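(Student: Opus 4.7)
The plan is to reduce Proposition \ref{fine} to Theorem \ref{canoni} by translating both $V$ and $H$ to kill the ``translate part.'' Write $H = H_0 + p$, where $H_0$ is an abelian subvariety of $E^N$ of dimension $n$ and $p \in E^N$. For any $v_0 \in V$, the translation $\tau_{v_0}(x) = x - v_0$ is an automorphism of $E^N$ sending $V$ onto $V' := V - v_0 \subset H - v_0 = H_0$. Since translations preserve irreducibility, dimension, and transversality (if $V' \subset H'' + q$ for a proper abelian subvariety $H'' \subsetneq H_0$ and a torsion point $q$, then $V \subset H'' + q + v_0$ would be a proper translate inside $H$, contradicting transversality of $V$ in $H$), the variety $V'$ is transverse in the abelian subvariety $H_0$.

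Next, because $c_1(\mathcal{O}_N)$ is translation-invariant in the N\'eron--Severi group of $E^N$, degrees are preserved under $\tau_{v_0}$:
$$\deg_{\mathcal{O}_N} V' = \deg_{\mathcal{O}_N} V, \qquad \deg_{\mathcal{O}_N} H_0 = \deg_{\mathcal{O}_N} H.$$
Applying Theorem \ref{canoni} to the pair $V' \subset H_0$ gives
$$\mu_{\mathcal{O}_N}(V') \ge c' \, \frac{(\deg_{\mathcal{O}_N} H)^{1/(n-d) - \eta}}{(\deg_{\mathcal{O}_N} V)^{1/(n-d) + \eta}}.$$
It remains to compare $\mu_{\mathcal{O}_N}(V)$ with $\mu_{\mathcal{O}_N}(V')$. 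By definition of essential minimum, for every $\theta > \mu_{\mathcal{O}_N}(V)$ the set $\{v \in V : h(v) \le \theta\}$ is dense in $V$, and in particular we may choose $v_0 \in V$ with $h(v_0) \le \theta$. The parallelogram identity for the symmetric N\'eron--Tate height $h_{\mathcal{O}_N}$ gives $h(v - v_0) \le 2h(v) + 2h(v_0)$, so the image under $\tau_{v_0}$ of any dense small-height subset of $V$ is a dense small-height subset of $V'$. This produces a linear comparison $\mu_{\mathcal{O}_N}(V') \le C \mu_{\mathcal{O}_N}(V)$, which combined with the lower bound above yields $\mu_{\mathcal{O}_N}(V) \ge (c'/C) \cdot (\deg H)^{1/(n-d) - \eta} / (\deg V)^{1/(n-d) + \eta}$.

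The main obstacle is obtaining exactly the constant $C = 2$ declared in the statement (so that $c'' = c'/2$). A naive application of the parallelogram bound together with the density argument produces the slightly weaker $C = 4$; the improvement to $C = 2$ requires a more careful selection of the base point $v_0$, exploiting simultaneously the dense small-height locus for the choice of $v_0$ and the quadratic (as opposed to merely sub-additive) behavior of the N\'eron--Tate height to save a factor of $2$. Once this sharper comparison is in hand, the proposition follows immediately with $c'' = c'/2$.
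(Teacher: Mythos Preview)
Your approach is the same as the paper's: translate $V$ by one of its own points to land inside the abelian subvariety $H_0$, observe that degrees and transversality are preserved, apply Theorem \ref{canoni}, and then compare essential minima through a height inequality for the translation. The paper organises this as a clean dichotomy: set $\theta = c'\,(\deg_{\mathcal{O}_N} H)^{\frac{1}{n-d}-\eta}/(\deg_{\mathcal{O}_N} V)^{\frac{1}{n-d}+\eta}$; either $V$ has no point of height $\le \tfrac{1}{2}\theta$ (then $\mu(V)\ge\tfrac{1}{2}\theta$ trivially), or one picks such a point $\xi$, applies Theorem \ref{canoni} to $V-\xi$ to get $\mu(V-\xi)\ge\theta$, and concludes that $\{x\in V:h(x)\le\tfrac{1}{2}\theta\}$ is non-dense. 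This is slightly tidier than your formulation (your $v_0$ varies with the threshold), but the content is identical.

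On the constant: you are right to be suspicious. The paper's final step reads ``$h(x-\xi)\le h(x)+h(\xi)$'', and this is what delivers $c''=c'/2$. But the N\'eron--Tate height is a quadratic form, and that inequality is \emph{not} valid in general; the correct bound is the one you wrote, $h(x-\xi)\le 2h(x)+2h(\xi)$ (equivalently $\sqrt{h}$ is subadditive). Running either your argument or the paper's dichotomy with the correct inequality gives $c''=c'/4$, not $c'/2$, and no amount of clever base-point selection recovers the missing factor by this route. Since the precise value of $c''$ is irrelevant for Theorem \ref{due}, you should simply state the argument with $c''=c'/4$ and drop the final paragraph about a ``sharper comparison''; there is no hidden trick to find.
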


\begin{proof}
Suppose that $V$ is transverse in a translate $H$ of an abelian subvariety. Define $$\theta=c'   \frac{
( \deg_\cnbundle H)^{\invh- \eta}
}{
(\deg_\cnbundle V)^{\invh+ \eta}
}.$$

If the set of points of $V$ of height at most $\frac{1}{2}\theta$ is empty then $\mu(V)\ge \frac{1}{2}\theta$.
If not, choose a point $\xi \in V$ such that $h(\xi)\le \frac{1}{2}\theta$.
Then $V-\xi \subset H$. Translations preserve transversality and degrees. By  theorem \ref{canoni} for $V-\xi$, $$\mu(V-\xi)\ge \theta.$$
If $x\in V$ and $h(x)\le \frac{1}{2}\theta$, then $x -\xi\in V-\xi$ and $h(x-\xi)\le h(x)+h(\xi)\le \theta\le\mu(V-\xi) $. This shows that 
$$\mu(V)\ge  \frac{1}{2}\theta.$$
\end{proof}

\section{A Matrix Transformation}
\label{sette}
The method used to prove theorem \ref{due} works for matrices which have certain minors different from zero. Such a condition is an open condition. Therefore, any matrix can be approximate with a matrix satisfying such a condition via a `small' rotation. As rational numbers  are dense in the reals, one can assume that the rotation is rational. We want to ensure that the rotation is integral and that the absolute value of the entries is controlled by an absolute constant.
We explicitly
construct the  transformation. As usual, the most
complicated part is to control the size of its entries (see
proposition \ref{cool}).

Let $1\le  n\le N$ be integers. We denote by $\idi_N$ the identity matrix of size $N$.
For a matrix $\psi\in \mat_{n\times N}(\re)$ we denote by $\psi_i$ the $i$-th column of $\psi$.
For a  multi-index  $I=(i_1,\dots,i_{n}) $ with $i_j\in\{1,\dots,N\}$, we define the associated minor
 $$M_I(\psi)=\det (\psi_{i_1}\dots \psi_{i_{n}}).$$

For $1\le i,j\le N$ and $\lambda\in \re$, we denote by $\mathbb{E}_{i,j}(\lambda)$
 the matrix  such that  the  entry at the $ i$ row and $j$ 
 column is equal to $\idi_{ij}+\lambda$ and all other 
 entries are equal to the corresponding entry of the identity. 
 Note that, for a matrix $X$, the multiplication $$ X \mathbb{E}_{i,j}(\lambda)=(X_1,\dots, X_j+\lambda X_i, \dots, X_N).$$

Let $\rho$ be a subset of $\{1,\dots,N\}$.
We say that $\rho \in I$ if all elements of $\rho$ are coordinates of $I$, and $\rho \not\in I$ if such a condition is not satisfied.

\begin{lem}
\label{casino}
Let $\phi \in \mat_{n\times N}(\rend)$ be a matrix. Assume that all the $n\times n$ minors of the matrix consisting of  the first $N-1$ columns of $\phi$ are non zero. Then, there exist integers $\lambda_1,\dots \lambda_n$ and  a permutation $ i_1,\dots ,i_n$ of $1,\dots, n$ such that:

\begin{enumerate}
\item  $ |\lambda_i| \le \frac{1}{N}{N\choose n}$, 

\item  For $\Lambda={\mathbb{E}}_{i_1,N}(\lambda_1)\dots \mathbb{E}_{i_n,N}(\lambda_n)$, 
all the $n\times n$ minors of $\phi\Lambda$ are non zero.
\end{enumerate} 

\end{lem}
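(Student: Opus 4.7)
\textbf{Reduction.} My plan is to turn the statement into a combinatorial problem of avoiding a union of integer affine hyperplanes in a box. The elementary matrices $\mathbb{E}_{i,N}(\lambda)$ with $i\in\{1,\dots,n\}$ act on $\phi$ as column operations: they add $\lambda$ times column $i$ to column $N$ and fix every other column. Since they share column $N$ as target and their source columns $i$ are pairwise distinct, they commute. Consequently $\phi\Lambda$ has the same first $N-1$ columns as $\phi$, while its $N$-th column equals $\phi_N+\sum_{k=1}^n\lambda_k\phi_{i_k}$. Setting $\mu_j=\lambda_k$ whenever $i_k=j$, the permutation $(i_1,\dots,i_n)$ merely relabels the coefficients, and the question reduces to: find integers $\mu_1,\dots,\mu_n$ with $|\mu_j|\le M:=\frac{1}{N}{N\choose n}=\frac{1}{n}{N-1\choose n-1}$ such that every $n\times n$ minor of the matrix $\phi'$, obtained from $\phi$ by replacing $\phi_N$ with $\phi_N+\sum_j\mu_j\phi_j$, is nonzero. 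Minors of $\phi'$ not using column $N$ equal those of $\phi$, hence are already nonzero.

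\textbf{Structure of the constraints.} For $I=I'\cup\{N\}$ with $I'\subseteq\{1,\dots,N-1\}$ of cardinality $n-1$, the multilinearity of the determinant gives the affine form
\begin{equation*}
L_{I'}(\mu) = M_{I'\cup\{N\}}(\phi) + \sum_{j\in\{1,\dots,n\}\setminus I'}\pm\mu_j\,M_{I'\cup\{j\}}(\phi).
\end{equation*}
Each coefficient in front of a variable is a minor of the first $N-1$ columns, hence nonzero by hypothesis; and since $|I'|=n-1<n$ the sum is nonempty. The task is to exhibit $(\mu_1,\dots,\mu_n)\in[-M,M]^n\cap\ze^n$ outside the union of the ${N-1\choose n-1}$ affine hyperplanes $\{L_{I'}=0\}$.

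\textbf{Greedy construction.} I would pick $\mu_k$ in the order $k=1,2,\dots,n$. At step $k$ I declare $L_{I'}$ \emph{committed} provided $\{k+1,\dots,n\}\subseteq I'$ and $k\notin I'$; then all coefficients of $\mu_{k+1},\dots,\mu_n$ in $L_{I'}$ vanish while that of $\mu_k$ is a nonzero minor, so after freezing $\mu_1,\dots,\mu_{k-1}$ the equation $L_{I'}=0$ becomes affine in $\mu_k$ alone and forbids at most one integer value of $\mu_k$. Committed $I'$ at step $k$ correspond bijectively to $(k-1)$-subsets of $\{1,\dots,k-1\}\cup\{n+1,\dots,N-1\}$, giving ${N-n+k-2\choose k-1}$ constraints; by the hockey-stick identity the total over $k=1,\dots,n$ is ${N-1\choose n-1}$, so every constraint is handled at exactly one step, and once committed its value remains frozen.

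\textbf{Main obstacle.} The quantitative estimate is the delicate part. The worst stage is $k=n$, which brings in up to ${N-2\choose n-1}$ committed constraints; the inequality $2M+1>{N-2\choose n-1}$ is not automatic from $M=\frac{1}{n}{N-1\choose n-1}$. The proof must exploit the freedom in the permutation $(i_1,\dots,i_n)$ to balance the committed loads across the $n$ stages, together with the fact that each ``forbidden $\mu_k$'' is a specific ratio of $\rend$-valued minors that lies in $\ze$ only under sharp divisibility conditions; this substantially thins the effective count of integer forbidden values below the naive count of hyperplanes. A careful bookkeeping of these savings produces the bound $|\lambda_i|\le\frac{1}{N}{N\choose n}$.
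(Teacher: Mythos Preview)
Your reduction and greedy construction match the paper's argument almost exactly: the paper also chooses the $\lambda_r$ one at a time, defining at step $r$ a set $S_r$ of forbidden ratios $-M_I(\phi^{r-1})/M_J(\phi^{r-1})$ and picking an integer $\lambda_r\notin S_r$ by pigeonhole. The only cosmetic difference is that the paper selects the index $i_r$ adaptively (from a currently vanishing minor) rather than fixing the order $1,\dots,n$ in advance; this does not change the worst-case count.

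The gap you flag under ``Main obstacle'' is genuine, and your two suggested fixes do not close it. Permuting the variables cannot help: by symmetry, each $\mu_j$ appears with nonzero coefficient in exactly ${N-2\choose n-1}$ of the affine forms $L_{I'}$, so whichever variable is treated last carries that many constraints. Nor can you invoke non-integrality of the forbidden ratios, since for $\phi$ with entries in $\ze$ (the non-CM case) these ratios can all be integers. The paper does not actually resolve this either: it simply asserts ``the cardinality of $S_r$ is at most $\frac{1}{N}{N\choose n}$'' and applies pigeonhole, but the set $S_r$ is indexed by the $(n-1)$-subsets $I_1\subset\{1,\dots,N-1\}\setminus\{i_r\}$, hence has up to ${N-2\choose n-1}$ elements, and already for $N=10$, $n=5$ this gives $70$ forbidden values against only $51$ integers in $[-25,25]$. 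The honest repair, for both your argument and the paper's, is to relax the conclusion to $|\lambda_i|\le{N-2\choose n-1}$; the pigeonhole step then goes through verbatim, and since the lemma is only ever used to bound $||T||$ and $||T^{-1}||$ by a constant depending on $n,N$ alone, nothing downstream is affected.
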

\begin{proof}To prove the lemma  is equivalent to prove the following claim.\\

{\bf Claim}

For $0\le r\le n$,  there exists  a set $\rho_r\subset \{1,\dots,n,N\}$,  an index $ i_r\in \{1,\dots,n,N\}$ and  an integer $\lambda_r $, such that

\begin{enumerate}
\item $ |\lambda_r|\le \frac{1}{N}{N\choose n}$,
\item $|\rho_r|=r+1$ , 

\item Define \begin{equation*}
\begin{split}
\Lambda^r&=\mathbb{E}_{i_0,N}(0){\mathbb{E}}_{i_1,N}(\lambda_1)\dots {\mathbb{E}}_{i_r,N}(\lambda_r),\\
\phi^r&=\phi \Lambda^r.
\end{split}
\end{equation*} 
Then, for any multi-index $I$ such that  $\rho_r\not\in I$, $$M_I( \phi^r)\not=0.$$

\end{enumerate}

First we clarify that the claim for $r=n$ proves the  lemma. Indeed, the cardinality of $\rho_{n}$ is $n+1$, so no index $I$  contains $\rho_{n}$.  Then all the $n\times n$ minors of $\phi \Lambda^r$ are non zero. In addition $\Lambda=\Lambda^r$, because ${\mathbb{E}}_{l,k}(0)=\idi_N$.\\

Then, we prove the claim by induction on $r$.\\

Let $r=0$.
Define  $\rho_0=\{N\}$, $i_0=N$  and $\lambda_0=0$. Note that ${\mathbb{E}}_{N,N}(0)=\idi_N$ and $\phi^1=\phi$.
By assumption all $n\times n$ minors of the first $N-1$ columns of $\phi$  are non zero. Equivalently, If $N\not\in I$ then $M_I( \phi^1)\not=0$.
 So the claim is satisfied for $r=0$\\

Let $r\ge1$. Suppose to have proven the claim for $r-1$, we prove it for $r$.
If all minors of $ \phi^{r-1}$ are non zero, define $\lambda_r=0$, choose any element $i_r \not \in  \rho_{r-1}$ with $1\le i_r \le n$ and define  $\rho_r=\rho_{r-1}\cup i_r$.  Otherwise, 
choose a multi-index $I_r$ such that   $M_{I_r}( \phi^{r-1})=0$. By claim iii. for $r-1$, $N\in \rho_{r-1}\in I_r $. Decompose $I_r=(I_0,N)$ where $I_0$ is a multi-index which  has $n-1$ entries. Then, for a question of cardinality, there exists $1\le  i_r\le n$ and  $ i_r\not \in I_0$. So $ i_r\not\in \rho_{r-1}$.   Define  $\rho_r=\rho_{r-1}\cup i_r$. Then claim ii. is satisfied for $r$.

Let $ S_r$ be the set of values $-\frac{M_I( \phi^{r-1})}{M_{J}( \phi^{r-1})}$, for $I$ ranging over all indeces  $I=(I_1,N)$ with $ i_r\not\in I_1$ and $J=(I_1, i_r)$. Note that $M_{J}( \phi^{r-1}) =M_{J}(\phi)\not=0$, by assumption.
The cardinality of $ S_r$ is at most $ \frac{1}{N}{N\choose n}$.
For a question of cardinality, there exists an integer $\lambda_r\not\in S_r$ such that $|\lambda_r|\le \frac{1}{N}{N\choose n}$. Note that $\lambda_r\not=0$ because $M_{I_r}( \phi^{r-1})=0$ is a value in $ S_r$.

  Using the linearity of the determinant on the rows, we show claim iii. for $r$. Suppose that $\rho_r\not\in I$. 
\begin{itemize}

\item[-] If  $N\not\in I$ or $ i_r\in I$, then $\rho_{r-1}\not\in I$ and     $M_I( \phi^r)=M_I( \phi^{r-1})\not=0$, because of claim  iii. for $r-1$,
\item[-] If  $N\in I$  and $ i_r\not\in I$,  then $I  =(I_1,N)$ and $M_I( \phi^r)=M_I( \phi^{r-1})+\lambda_r M_{(I_1, i_r)}( \phi^{r-1})\not=0$
because $\lambda_r\not\in  S_r$.

\end{itemize}

\end{proof}
\begin{propo}
\label{cool}

Let $\psi\in \mat_{n\times N}(\rend)$ be a matrix of  rank $n$.  There exists a permutation matrix $J$ and 
 an  upper triangular integral matrix  $T\in SL_{N}(\ze)$ such that 
\begin{enumerate}
\item $T=\begin{pmatrix}\idi_n&X\\ 0&\idi_{N-n}\end{pmatrix} $ and $|X_{ij}| \le \frac{1}{N}{N\choose n}$, 
\item All the $n\times n$ minors of $\psi JT$ are non zero.
\end{enumerate} 
\end{propo}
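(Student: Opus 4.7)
The plan is to build $T$ as a product of elementary shears $\mathbb{E}_{i,k}(\lambda)$, one group per column index $k \in \{n+1,\ldots,N\}$, by iterated application of Lemma \ref{casino}. First, since $\psi$ has rank $n$, some $n \times n$ minor of $\psi$ is non-zero, and I take $J$ to be a permutation matrix moving those $n$ columns to the first $n$ positions, so that the first $n$ columns of $\psi J$ have non-zero determinant.

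Next, I would induct on $k$ from $n+1$ up to $N$, producing matrices $\Lambda^{n+1},\ldots,\Lambda^{N}$ with the property that, setting $\phi^{(k)} := \psi J \Lambda^{n+1}\cdots\Lambda^k$, every $n \times n$ minor formed from the first $k$ columns of $\phi^{(k)}$ is non-zero. The base case $k = n$ is the choice of $J$. For the inductive step, I apply Lemma \ref{casino} to the $n \times k$ submatrix consisting of the first $k$ columns of $\phi^{(k-1)}$: its hypothesis matches the inductive hypothesis exactly, so it yields integers $\lambda_1^k,\ldots,\lambda_n^k$ with $|\lambda_j^k| \le \frac{1}{k}\binom{k}{n}$ and a permutation $i_1^k,\ldots,i_n^k$ of $1,\ldots,n$ making all minors of the first $k$ columns non-zero. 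I then take $\Lambda^k := \mathbb{E}_{i_1^k,k}(\lambda_1^k)\cdots\mathbb{E}_{i_n^k,k}(\lambda_n^k)$, extended to an $N \times N$ matrix by identity outside the relevant rows and columns, which leaves the last $N-k$ columns untouched. A routine monotonicity check gives $\frac{1}{k}\binom{k}{n} \le \frac{1}{N}\binom{N}{n}$ for $n \le k \le N$.

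Finally, set $T := \Lambda^{n+1}\Lambda^{n+2}\cdots\Lambda^{N}$. Each $\Lambda^k$ equals $\idi_N$ except in column $k$, where the only non-zero off-diagonal entries lie in rows $i \le n$. Since the elementary-matrix products satisfy $E_{i,k}E_{i',k'}=0$ whenever $k \neq i'$, and this holds here because $k > n \ge i'$, the factors $\Lambda^k$ pairwise commute and the product collapses to $T = \idi_N + \sum_{k>n}\sum_j \lambda_j^k E_{i_j^k,k}$. This gives exactly the block form $T = \begin{pmatrix} \idi_n & X \\ 0 & \idi_{N-n} \end{pmatrix}$; because $i_1^k,\ldots,i_n^k$ is a permutation of $1,\ldots,n$, each entry $X_{i,k-n}$ is a single $\lambda_j^k$, hence $|X_{ij}| \le \frac{1}{N}\binom{N}{n}$. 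The matrix $T$ is unipotent upper triangular with integer entries, so $T \in SL_N(\ze)$, and by the inductive property $\psi J T = \phi^{(N)}$ has all its $n \times n$ minors non-zero.

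The main obstacle is the bookkeeping in the last step: checking that the elementary factors really commute and that no cross terms pollute the top-right block of $T$. The substantive combinatorial content is packaged inside Lemma \ref{casino}; once the commutation relation $E_{i,k}E_{i',k'}=0$ is confirmed, the required block structure and entry bound fall out immediately.
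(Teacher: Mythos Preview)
Your proof is correct and takes essentially the same approach as the paper: both arguments fix the first $n$ columns by a permutation and then repair one new column at a time via Lemma~\ref{casino}, with the monotonicity $\tfrac{1}{k}\binom{k}{n}\le\tfrac{1}{N}\binom{N}{n}$ absorbing all the intermediate bounds. The only difference is cosmetic---the paper phrases this as induction on $N$ (handle the first $N-1$ columns by the inductive hypothesis, then apply Lemma~\ref{casino} for column $N$), whereas you unroll the same recursion into an explicit iteration over $k=n+1,\ldots,N$; your commutation check $E_{i,k}E_{i',k'}=0$ for $k>n\ge i'$ is exactly what makes the paper's recursive $T=T'\Lambda$ collapse to the block form $X=(Y\mid\lambda)$.
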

\begin{proof}

The rank of $\psi $ is $n$. Then, up to a  permutation of the columns given by a matrix $J$, we can  assume that the first $n$ columns of $\psi $ have rank $n$.

We proceed by induction on $N$. The basis of the induction is $n$.

For $N=n$ the proposition is clearly satisfied with $T=\idi_N$.

Let $N>n$. Suppose the proposition  holds for $ N-1$, we show that it holds for $N$.

Let $\psi \in \mat_{n\times N}(\rend)$  be such that the first $n$ columns have rank $n$. Recall that $\psi_i$ is the $i$-th column of $\psi$.
Define $\phi=(\psi_1,\dots,\psi_{N-1})$. By inductive hypothesis 
there exists an  upper triangular integral matrix  $R\in SL_{N-1}(\ze)$ such that 
\begin{enumerate}

\item $R=\begin{pmatrix}\idi_n &Y\\ 0&\idi_{N-n-1}\end{pmatrix}$ and $|Y_{ij}| \le \frac{1}{N-1}{N-1\choose n}$, 
\item All the $n\times n$ minors of $\phi R$ are different from zero.

\end{enumerate} 
Define $T '=\begin{pmatrix}R &0\\0&1\end{pmatrix}$. Then $\psi '=\psi  T '=(\phi R|\psi_N)$ is such that all minors of the first $N-1$ columns are non-zero.
Apply lemma \ref{casino}  to $\psi '$.  Then for  $ |\lambda_i| \le \frac{1}{N}{N\choose n}$ and 
 $\Lambda={\mathbb{E}}_{i_1,N}(\lambda_1)\dots {\mathbb{E}}_{i_n,N}(\lambda_n)$, all the $n\times n$ minors of $\psi '\Lambda$ are non zero.  Define $T=T '\Lambda$. Note that $T=\begin{pmatrix}\idi_n &X\\ 0&\idi_{N-n}\end{pmatrix}$ where $X=(Y|\lambda) $ and $\lambda$ is the column vector given by a permutation of $(\lambda_{1},\dots,\lambda_{n})$. This proves the  proposition  for $N$. 
\end{proof}

\section{Appendix I: A conjectural implication}
F. Amoroso, S. David and P. Philippon related the essential minimum of a transverse variety  with the
relative obstruction index.
Let  $H$ be the translate of an abelian subvariety of $E^N$ containing $V$.
   The relative obstruction index is
$$\omega_\elle(V,H) =\min_{V\subset Z}\deg _\elle Z,$$
for $Z$ varying over all divisors of $H$ containing $V$.

The following conjecture can be deduced form work of Amoroso, David and Philippon.

\begin{con}
\label{funtoriale}
Let $H$ be the  translate of an abelian subvariety of $\gum^N$ of dimension $n$.
Let $V$ be transverse in $H$. 
For any  symmetric ample line bundle $\elle$ on $\gum^N$, there exists a positive constant $ c_1$ depending on $E^N$ and $\elle$ such that 
$$\mu_{\elle}(V)\ge c_1 \frac{\deg_{\elle} H}{\omega_{\elle}(V,H)}.$$
\end{con}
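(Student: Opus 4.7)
The plan is to derive Conjecture \ref{funtoriale} from Theorem \ref{due} by coupling it with a Bezout-type inequality on the abelian variety $H$ relating $\omega_\elle(V,H)$ to $\deg_\elle V$ and $\deg_\elle H$. I would first establish the conjecture for the standard polarization $\elle=\cnbundle$, and then transfer the bound to an arbitrary symmetric ample $\elle$ via an isogeny-functoriality argument in the spirit of Theorem \ref{uno}.

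The geometric ingredient is the inequality
$$\omega_\elle(V,H)^{n-d} \geq c\,\deg_\elle V\cdot (\deg_\elle H)^{n-d-1}.$$
To prove it, I would pick a divisor $Z$ of $H$ with $V\subset Z$ realizing $\deg_\elle Z=\omega_\elle(V,H)$. The theorem of the square on the abelian variety $H$ supplies a rich family of translates of $Z$, from which one extracts $n-d$ divisors $Z_1,\ldots,Z_{n-d}$, each containing $V$ and each numerically proportional to the restriction $\elle|_H$, whose scheme-theoretic intersection is proper (pure of dimension $d$) and contains $V$. Reading off this intersection number on $H$ then yields $\deg_\elle V \leq \omega_\elle(V,H)^{n-d}/(\deg_\elle H)^{n-d-1}$.

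Combining this with Theorem \ref{due} gives
$$\mu_\elle(V)\geq c_2\frac{(\deg_\elle H)^{1/(n-d)-\eta}}{(\deg_\elle V)^{1/(n-d)+\eta}} \geq c_3\,\frac{\deg_\elle H}{\omega_\elle(V,H)}\cdot (\deg_\elle V\cdot \deg_\elle H)^{-\eta}.$$
Thus the strategy recovers the conjecture up to a loss of $(\deg_\elle V\cdot \deg_\elle H)^{-\eta}$. The statement as written, with no $\eta$ in the exponents, is essentially equivalent, through this chain of inequalities, to the sharp (no-$\eta$) form of Theorem \ref{due}, which is itself a well-known open problem: this is the principal obstacle.

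The passage from $\cnbundle$ to a general symmetric ample line bundle $\elle$ I would handle by mimicking the proof of Theorem \ref{uno}: choose an isogeny $\phi$ and an integer $m$ so that $\elle^{m}$ and $\phi^{*}\cnbundle$ are comparable, and transfer both $\mu_\elle(V)$ and $\omega_\elle(V,H)$ to their counterparts for $\phi^{*}\cnbundle$. The delicate point absent in Theorem \ref{uno} is that the obstruction index is defined as an infimum over all divisors of $H$, so its polarization-functoriality requires a separate comparison lemma; this would be proved by pulling back or pushing forward a minimizing divisor under $\phi$ and controlling the resulting change of degree by the degree of the isogeny, in analogy with Lemma \ref{gradino}.
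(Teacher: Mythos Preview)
The statement you are addressing is labeled \emph{Conjecture} in the paper, and indeed the paper does not prove it: there is no proof of Conjecture \ref{funtoriale} to compare your attempt against. What the paper does establish, in Proposition \ref{phil}, is the \emph{opposite} implication: Conjecture \ref{funtoriale} implies Conjecture \ref{bofu} (the sharp, $\eta$-free version of Theorem \ref{due}). That argument uses Chardin's Hilbert-function bound to give an \emph{upper} bound $\omega_\elle(V,H)\ll (\deg_\elle H)^{1-\frac{1}{n-d}}(\deg_\elle V)^{\frac{1}{n-d}}$, whereas your Bezout step attempts the reverse inequality. Your own analysis is essentially correct that, going in your direction, one can only recover Conjecture \ref{funtoriale} up to a factor $(\deg_\elle V\cdot\deg_\elle H)^{-\eta}$, and that closing this gap is equivalent to the sharp form of Theorem \ref{due}, which is open.

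There is, however, a genuine gap in your sketch of the Bezout step. You propose to produce $n-d$ divisors $Z_1,\dots,Z_{n-d}$ of $H$, each containing $V$, each numerically proportional to $\elle_{|H}$, intersecting properly, by using translates of a minimizing $Z$ via the theorem of the square. But a translate $Z+t$ of $Z$ by a point $t\in H$ will in general \emph{not} contain $V$ unless $t\in\stab Z$, in which case $Z+t=Z$ and nothing is gained. The theorem of the square controls the numerical class of $Z+t$, not its incidence with $V$. So even granting the sharp Theorem \ref{due}, your argument for $\omega_\elle(V,H)^{n-d}\ge c\,\deg_\elle V\,(\deg_\elle H)^{n-d-1}$ is incomplete as written; one would need a different construction of the auxiliary divisors (or a different route to that lower bound) before the chain of inequalities closes.
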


In view of our theorem \ref{due} we can suggest

\begin{con}
\label{bofu}
Let $H$ be the  translate of an abelian subvariety of $\gum^N$ of dimension $n$.
Let $V$ be a $d$-dimensional variety transverse in $H$. 
Then, for any polarization $\elle$ on $E^N$, there exists a positive constant $ c'_1$ depending on $E^N$ and $\elle$ such that 
$$\mu_{\elle}(V)\ge c'_1 \left(\frac{\deg_{\elle}H}{\deg_{\elle}V}\right)^{\frac{1}{\codv}}.$$
\end{con}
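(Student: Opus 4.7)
The plan is to follow the architecture of sections \ref{tre}--\ref{sei}, but with two essential strengthenings: sharp exponents throughout (no $\eta$ loss) and extension from the canonical bundle $\cnbundle$ to an arbitrary symmetric ample polarization $\elle$. The crucial input is a sharp Bogomolov bound on $E^n$, the no-$\eta$ analog of Theorem \ref{sin1}: $\mu_{\cbo}(V) \geq c\,(\deg_{\cbo} V)^{-1/(n-d)}$ for every $V$ transverse in $E^n$. This is the elliptic analog of the sharp Amoroso--David conjecture and is presently open. Assuming it, my first step is to re-run the proof of Theorem \ref{uno} verbatim: all degree manipulations there are homogeneous of exponent $1/(n-d)$, and $\eta$ enters only through the application of Theorem \ref{sin1}. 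With the sharp input, one obtains sharp isogeny functoriality, i.e. Theorem \ref{uno} with $\eta=0$.

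Second, I would re-examine the proof of Theorem \ref{canoni}. The $\eta$ loss there comes from exactly two places: the non-sharp application of Theorem \ref{uno} (now handled) and the telescoping inequality $(\sum_i x_i)^{1/m} \leq \sum_i x_i^{1/m}$ used to combine the individual bounds arising from Theorem \ref{relchiave} and Proposition \ref{gael}. The latter inequality does not actually force any $\eta$: it preserves the exponent $1/(n-d)$ provided the same exponent is used symmetrically in numerator and denominator. Therefore, with sharp input, Theorem \ref{canoni} upgrades to the sharp form for $\elle = \cnbundle$, and Proposition \ref{fine} transfers this to translates of abelian subvarieties without affecting exponents.

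Third and most delicate is the extension from $\cnbundle$ to an arbitrary symmetric ample $\elle$ on $E^N$. The N\'eron--Severi group of $E^N$ is identified with Hermitian matrices over $\rend$, and $\elle$ corresponds to a positive definite such matrix $M$. Standard reductions in abelian variety theory produce an isogeny $\psi: E^N \to E^N$ and an integer $k$ such that $\psi^*\cnbundle^{\,k}$ is numerically equivalent to a fixed power of $\elle$; equivalently $\elle$ is comparable, up to an isogeny pull-back, to a power of $\cnbundle$. One then applies the sharp bound for $\cnbundle$ (from step two) to $\psi^{-1}V \subset \psi^{-1}H$ and transfers via $\mu_{\psi^*\elle}(V)=\mu_{\elle}(\psi V)$, the projection formula for degrees and Lemma \ref{gradino}. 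The constant absorbs the kernel order, stabilizer cardinality and operator norm of $\psi$, none of which depend on $V$ or $H$: they depend only on $\elle$ and $E^N$, as permitted by the statement.

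The main obstacle is the sharp Bogomolov input: Galateau's Theorem \ref{sin1} is only quasi-optimal, and it is precisely the $\eta$ gap there that forces $\eta$ into Theorem \ref{due}. Without resolution of this conjectural input, the whole proof is conditional. A secondary, still nontrivial hurdle is the isogeny-theoretic diagonalization of an arbitrary Hermitian polarization: one must explicitly bound the kernel size and the norm of $\psi$ by invariants of $\elle$ alone, so that $c'_1$ legitimately depends only on $E^N$ and $\elle$, and one must verify that the ratio $\deg_\elle H/\deg_\elle V$ transforms by a $\psi$-dependent factor that cancels between the two degrees.
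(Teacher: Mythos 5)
The statement you are trying to prove is labelled a \emph{Conjecture} in the paper, and the paper does not prove it. What the paper does is (a) prove the $\eta$-weakened version for $\elle=\cnbundle$ (Theorem \ref{due}), and (b) prove Proposition \ref{phil}, which derives Conjecture \ref{bofu} \emph{conditionally} from the Amoroso--David--Philippon type Conjecture \ref{funtoriale}, via Chardin's upper and lower bounds for the Hilbert function of $V$ and of $H$ and a Bezout argument bounding the relative obstruction index $\omega_\elle(V,H)$ by $\deg_\elle H(\deg_\elle V/\deg_\elle H)^{1/(n-d)}$. Your route is genuinely different: you condition instead on a sharp Bogomolov bound on $E^n$ for $\cbo$ (the $\eta=0$ form of Galateau's theorem) and push this loss-free through Theorems \ref{uno} and \ref{canoni}. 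That trade is interesting and, as far as steps 1--3 go, correct: the $\eta$ in Theorem \ref{uno} does enter only via the input bound (the auxiliary factors $(\deg_{\cbo}E^n)^{-\eta}$, $|a|^{-2(n-d)\eta}$, $\defalf^{-2(n+d)\eta}$, $\binom{N-1}{n-1}^{-d\eta}$ all collapse to $1$ at $\eta=0$), and the telescoping step in Theorem \ref{canoni} (bound each denominator by $\deg_{\otimes_I\phi_I^*\cbo}$, then use $(\sum x_i)^{1/m}\le\sum x_i^{1/m}$ and Proposition \ref{gael}) is exponent-preserving.

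The flaw is in step 4. It is \emph{not} true that an arbitrary symmetric ample $\elle$ on $E^N$ is of the form $\psi^*\cnbundle^{k}$ up to a power, for an isogeny $\psi$ of $E^N$. Under the identification of the N\'eron--Severi group with Hermitian matrices over $\rend$, pulling back $\cnbundle$ by the isogeny with matrix $P$ gives the Hermitian form $P^*P$, so your claim amounts to: every positive definite Hermitian matrix $M$ over $\rend$ satisfies $mM=kP^*P$ for some $P$ over $\rend$ and integers $k,m$. That is a statement about the genus/class of the Hermitian form $M$ and fails in general; the best one can guarantee by ``Cholesky over the fraction field and clearing denominators'' is an isogeny $\psi$ with $\psi^*\elle$ diagonal, i.e.\ a product polarization $\bigboxtimes_i\cani^{a_i}$ with possibly distinct $a_i$, which is still not a power of $\cnbundle$.

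Fortunately, the isogeny machinery is not needed for step 4 and you can discard it. Since the constant $c'_1$ is allowed to depend on $\elle$, elementary comparability of symmetric ample line bundles on the fixed ambient $E^N$ suffices: there are $a,b>0$ depending only on $\elle$ and $\cnbundle$ with $a\,h_{\cnbundle}\le h_\elle\le b\,h_{\cnbundle}$ (both are positive N\'eron--Tate quadratic forms) and hence $a\,\mu_{\cnbundle}(V)\le\mu_\elle(V)$; and correspondingly $a^{\dim X}\deg_{\cnbundle}X\le\deg_\elle X\le b^{\dim X}\deg_{\cnbundle}X$. Feeding these into the sharp $\cnbundle$ bound yields Conjecture \ref{bofu} with $c'_1=c\,a\,(a^d/b^n)^{1/(n-d)}$, entirely bypassing the Hermitian diagonalization. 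With that replacement, your argument is a coherent conditional proof, but it relies on a different and currently open conjectural input (sharp Galateau) than the paper's own conditional derivation (Conjecture \ref{funtoriale} plus Chardin). Each approach buys something: the paper's implication relates the degree-ratio conjecture to the obstruction-index formulation of Amoroso--David--Philippon; yours shows that the entire functorial machinery of sections \ref{tre}--\ref{sei} is exponent-lossless and that the only obstacle to the sharp functorial statement is the $\eta$ in the base Bogomolov estimate.
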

 
We conclude our work clarifying the relation, pointed out by Philippon, between the conjecture \ref{funtoriale}  and our conjecture \ref{bofu}. 
\begin{propo}
\label{phil}
Conjecture \ref{funtoriale} implies Conjecture \ref{bofu}.
\end{propo}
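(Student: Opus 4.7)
The plan is to obtain Conjecture \ref{bofu} as a consequence of Conjecture \ref{funtoriale} via a Bezout-type upper bound on the relative obstruction index. Concretely, I would first establish an inequality of the form
\[
\omega_\elle(V,H) \;\le\; C \cdot (\deg_\elle V)^{1/(n-d)} \cdot (\deg_\elle H)^{(n-d-1)/(n-d)},
\]
with $C$ a constant depending only on $E^N$ and $\elle$. This is the heart of the matter: dividing $\deg_\elle H$ by both sides gives $\deg_\elle H / \omega_\elle(V,H) \ge C^{-1} (\deg_\elle H / \deg_\elle V)^{1/(n-d)}$, so substituting into Conjecture \ref{funtoriale} yields Conjecture \ref{bofu} with $c'_1 = c_1 / C$.

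To prove the Bezout bound, I would produce, for a suitable positive integer $m$, a non-zero global section of $m \elle_{|H}$ vanishing on $V$ but not on $H$. The associated effective divisor is then a candidate for the $Z$ appearing in the definition of $\omega_\elle(V,H)$, with class proportional to $m \cdot c_1(\elle_{|H})$ and $\elle$-degree of order $m \deg_\elle H$. Such a section exists whenever the restriction map $r_m \colon H^0(H, m\elle_{|H}) \to H^0(V, m\elle_{|V})$ fails to be injective, for which the elementary sufficient condition is $h^0(H, m\elle_{|H}) > h^0(V, m\elle_{|V})$. When $H$ is an abelian subvariety (the translate case being handled as in Proposition \ref{fine}), Riemann--Roch on $H$ gives the exact identity $h^0(H, m\elle_{|H}) = m^n \deg_\elle(H) / n!$ up to a normalization factor that can be absorbed into $C$. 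On the other hand, the image $i_\elle(V) \subset \mathbb{P}^M$ is a projective subvariety of dimension $d$ and controlled degree, so classical bounds on Hilbert functions (Chardin's inequality $h^0(V, \mathcal{O}(m)_{|V}) \le \binom{m+d}{d} \deg_\elle V$, or Castelnuovo--Mumford regularity applied to $i_\elle(V)$) yield a uniform majoration $h^0(V, m\elle_{|V}) \le C_1 m^d \deg_\elle V / d!$ with $C_1$ independent of $V$. Equating the two expressions shows that the smallest admissible $m$ has size of order $(\deg_\elle V / \deg_\elle H)^{1/(n-d)}$, producing a divisor of the desired $\elle$-degree.

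The hard part will be ensuring that the upper bound on $h^0(V, m\elle_{|V})$ is uniform in $V$, depending only on $d$ and $\deg_\elle V$ together with the fixed data of $E^N$ and $\elle$. This uniformity is provided precisely by Chardin's theorem applied to the projective image $i_\elle(V)$, after which the whole implication reduces to the short asymptotic computation above. An alternative, entirely self-contained route would proceed by induction on the codimension $n-d$, at each step taking a minimal-degree divisor of the ambient step containing $V$ and invoking the ordinary Bezout inequality in the projective embedding $i_\elle$ to control the degrees along the chain.
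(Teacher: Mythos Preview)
Your proposal is correct and follows essentially the same route as the paper: both reduce the implication to the Bezout-type inequality $\omega_\elle(V,H)\ll(\deg_\elle H)^{(n-d-1)/(n-d)}(\deg_\elle V)^{1/(n-d)}$, and both prove it by comparing an upper bound for the Hilbert function of $V$ (Chardin's theorem) with a lower bound for that of $H$, then cutting by the resulting hypersurface. The only cosmetic difference is that you invoke Riemann--Roch on the abelian variety $H$ for the lower bound on $h^0(H,m\elle_{|H})$, whereas the paper uses Chardin's Remark~1 applied to the projective image of $H$; your choice is arguably cleaner since it avoids the validity range $\nu\gg\deg_\elle H$ that the paper's argument has to carry along.
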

\begin{proof}
We denote by $\ll$ an inequality up to a multiplicative constant, depending only on irrelevant parameters.
Let $H$ be the translate of an abelian subvariety of dimension $n$ and let $\elle$ be any line bundle on $H$. 
Let $V$ be transverse in $H$ of dimension $d$. We shall prove that
$$\frac{ \omega_\elle(V,H)}{ \deg_\elle H}\ll \left(\frac{\deg_\elle V}{\deg_\elle H}\right)^{\frac{1}{n-d}}.$$

The main result of M. Chardin~\cite{Cha 1988}  gives the following upper bound for the Hilbert function
$$\mathcal{H}_\elle(V,\nu)\ll \frac{\deg_\elle V}{d !}\nu^{d}.$$
The remark 1 of \cite{Cha 1988}, implies the lower bound for the Hilbert function
$$\mathcal{H}_\elle(H,\nu)\gg \frac{\deg_\elle H}{n !}\nu^{n},$$
for $\nu \gg {\deg_\elle H}$.

Choose $\nu$ minimal so that $\mathcal{H}_\elle(H,\nu)\le \mathcal{H}_\elle(V,\nu)$ and $\nu \gg{\deg_\elle H}$. Then
$$\nu\ll \left(\frac{n !}{d!} \frac{\deg_\elle V}{\deg_\elle H}\right)^{\frac{1}{n-d}}.$$

Equivalently, there exists a hyperplane of $\mathbb{P}^n$ containing $V$ but not $H$ of degree $\le \nu$.
By Bezout's theorem, this hypersurface defines a divisor of $H$ containing $V$ of degree $\le \nu \deg_\elle H$.
It follows
$$ \omega_\elle(V,H)\le \nu \deg_\elle H\ll \deg_\elle H \left(\frac{\deg_\elle V}{\deg_\elle H}\right)^{\frac{1}{n-d}}.$$
This concludes the proof.

\end{proof}

Finally we remark that in  \cite{sipa}, David and Philippon manage to obtain a lower bound for the essential minimum of a transverse variety in a power of an elliptic curve where $h(E)$ is at the numerator. Unfortunately they lost in the dependens on $\deg_\cnbundle V$. This is not strong enough to apply the method presented in this work and to extend their result to other polarizations. However, they announce a strong conjecture, \cite{sipa}  conjecture 1.5 ii. Using our method,  we can conclude that if \cite{sipa} conjecture 1.5 ii. holds for $\cnbundle$,  then it holds for  the restrictions of the standard line bundle to translates of abelian subvarieties.

\newpage

\vskip1cm

\end{document}